%%%%%%%%%%%%%%%%%%%%%%%%%%%%%%%%%%%%%%%%%%%%%%%%%%%
\documentclass[12pt]{amsart}

\setlength{\textheight}{23cm}
\setlength{\textwidth}{16cm}
\setlength{\topmargin}{-0.8cm}
\setlength{\parskip}{0.3\baselineskip}
\hoffset=-1.4cm

\usepackage{amsmath}
\usepackage{amsfonts}
\usepackage{amsthm}
\usepackage{amssymb}
\usepackage{enumerate}
\usepackage[english]{babel}
\usepackage[ansinew]{inputenc}
\usepackage[all]{xy}
\usepackage{color}

\theoremstyle{definition}

\newtheorem{defi}{Definition}[section]
\newtheorem{remark}[defi]{Remark}

\newtheorem{example}[defi]{Example}
\newtheorem{definition}[defi]{Definition}

\theoremstyle{plain}

\newtheorem{theorem}[defi]{Theorem}
\newtheorem{corollary}[defi]{Corollary}
\newtheorem{lemma}[defi]{Lemma}
\newtheorem{proposition}[defi]{Proposition}

\setlength\parindent{15pt} \linespread{1}

\input xy
\xyoption{all}

\begin{document}

\title[Bumpy fundamental group scheme]{On the
bumpy fundamental group scheme}

\author[M. Antei]{Marco Antei}\thanks{The author thanks the project TOFIGROU (ANR-13-PDOC-0015-01)}

\address{Laboratoire J.A.Dieudonn\'e, UMR CNRS-UNS No 7351
Universit\'e de Nice Sophia-Antipolis, Parc Valrose,
06108 NICE Cedex 2, France}

\email{Marco.ANTEI@unice.fr}

\subjclass[2000]{14G99, 14L20, 14L30}

\begin{abstract}
In this short paper we first recall the definition and the construction of the fundamental group scheme of a scheme $X$ in the known cases: when it is defined over a field and when it is defined over a Dedekind scheme. It classifies all the finite (or quasi-finite) fpqc torsors over $X$. When $X$ is defined over  a noetherian regular scheme $S$ of any dimension we do not know if such an object can be constructed. This is why we introduce a new category, containing the fpqc torsors, whose objects are torsors for a new topology. We prove that this new category is cofiltered thus generating a fundamental group scheme over $S$, said \textit{bumpy} as it may not be flat in general. We prove that it is flat when $S$ is a Dedekind scheme, thus coinciding with the \textit{classical} one.   \end{abstract}

\maketitle
\tableofcontents
\section{Introduction}

In his famous \cite{SGA1} Alexander Grothendieck constructs the étale fundamental group $\pi^{\text{\'et}}(X,\overline{x})$ of a scheme $X$, endowed with a geometric point $\overline{x}:Spec(\Omega)\to X$. It is a pro-finite group classifying the étale covers over $X$. Grothendieck elaborates for  $\pi^{\text{\'et}}(X,\overline{x})$ a specialization theory (\cite{SGA1}, Chapitre X) but it seems he is not entirely satisfied of it. Indeed at the end of the aforementioned tenth chapter he claims (here we freely translate from French\footnote{Une théorie satisfaisante de la
spécialisation du groupe fondamental doit tenir compte de la ``composante continue'' du ``vrai'' groupe fondamental, correspondant à la classification des revêtements principaux de groupe structural des groupes infinitésimaux ; moyennant quoi on serait en droit à
s'attendre que les ``vrais'' groupes fondamentaux des fibres géométriques d'un morphisme
lisse et propre $f:X\to Y$   forment un joli système local sur $X$, limite projective de
schémas en groupes finis et plats sur $X$. }) that \\

\emph{a satisfactory specialization theory
 of the fundamental group should consider the ``continuous component'' of the ``true'' fundamental group corresponding to the classification of principal homogeneous spaces with infinitesimal structure group; whereby one would have to expect that the ``true'' fundamental groups of the geometric fibres of a smooth and proper morphism
 $ f: X \to Y $ form a nice local system over $ X $, projective limit of 
finite and flat group schemes over  $X$.} \\

Though in a footnote in the same page he recalls us that\footnote{Cette conjecture extrêmement séduisante est malheureusement mise en défaut par un exemple inédit de M. Artin, déjà lorsque les fibres de $f$ sont des courbes algébriques de genre donné $g\geq 2$.} \\

\emph{this extremely seducing  conjecture is unfortunately contradicted by an unpublished example of M. Artin, even when the fibres of $f$ are algebraic curves of given genus $g\geq 2$} \\

the question of constructing a ``true'' fundamental group was neverthless very interesting. It is only at the end of the seventies that Madhav Nori, in his PhD thesis (see \cite{N1} and \cite{N2}), constructs it calling it ``the fundamental group scheme''.  It will be called again ``the true fundamental group'' in \cite{DelMil} but, at our knowledge, it will be the last time.  In \S \ref{sez:campo} we will recall his description, while in \S \ref{sez:Dede} we will give some details about the construction of the fundamental group scheme of a given scheme $X$ defined over a Dedekind scheme. In \S \ref{sez:ogni} we will finally suggest a new approach for higher dimensional base schemes: as we do not know if a fundamental object classifying all finite (or quasi-finite) fpqc torsors can be constructed, we introduce a new larger category whose objects are torsors for a new topology that we will describe. We prove that this new category is cofiltered thus generating a $S$-fundamental group scheme, said \textit{bumpy}, classifying all those new torsors. Though in general it may not be flat (which explains its name) we prove in \S \ref{sez:revisited} that it is flat at least when $S$ is a Dedekind scheme, thus coinciding with the \textit{classical} one. 

\section*{Acknowledgments}
We would like to thank A. Aryasomayajula, I. Biswas, A. Morye, A. Parameswaran and T. Sengupta for the wonderful meeting that they organised in both Tata Institute of Fundamental Research (TIFR, Mumbai) and University of Hyderabad (UoH) and for inviting me to actively participate to this event.

\section{Over a field}
\label{sez:campo}
The main references for the material contained in this sections are certainly  \cite{N1} and \cite{N2}, but also \cite{Sza}, Chapter 6. Other references will be provided when necessary for more detailed results.

\subsection{Tannakian description}
\label{sez:SGF11} 
Let  $k$ be a perfect\footnote{In these notes we are using Nori's setting, though it has been observed several times that his tannakian construction would hold for \emph{any} field $k$ provided $X$ satisfies the condition  $H^0(X,\mathcal{O}_X)=k$, which is automatically satisfied when $k$ is perfect.} field, $X$ a reduced and connected scheme, proper over $Spec(k)$, endowed with a $k$-rational point $x$. A vector bundle $V$ on $X$ is said to be \emph{finite} if there exist two polynomials  $f$ and $g$ with $f\neq g$ with non-negative integral coefficients such that $f(V)\simeq g(V)$, where, when we evaluate the polynomials, we have replaced the sum by the direct sum and the product by the tensor product. Now, let $C$ be a smooth and proper curve over $k$ and $W$ a vector bundle on $C$. Then $W$ is said to be \emph{semi-stable of degree $0$} if $deg(W)=0$ and for any sub-vector bundle $U\subset W$ we have $deg(U)\leq 0$.
\begin{definition}\label{defSS}
A vector bundle $V$ on $X$ is said to be Nori semi-stable\footnote{Of course this name has been given later; in Nori's works those vector bundles did not have a specific name.} if for any curve $C$ smooth and proper over $k$ and every morphism $j:C\to X$ the vector bundle $j^*(V)$ is semi-stable of degree $0$.
\end{definition}
When the characteristic of the field $k$ is $0$ then the category of finite vector bundles is tannakian  if we endow it with the trivial object  $\mathcal{O}_X$, the tensor product $\otimes_{\mathcal{O}_X}$ (of locally free sheaves) and the fibre functor $x^*$ (here $x:Spec(k)\to X$ is the given rational point). When the characteristic of the field $k$ is positive the category of finite vector bundle is in general not even abelian, this is why we need a bigger category containing the finite vector bundles. Nori proves that every finite vector bundle is indeed Nori semi-stable (cf. Definition \ref{defSS}): Nori's intuition has been to understand that the the abelian hull of the category of finite vector bundles inside the (abelian) category of Nori semi-stable vector bundles would be a good candidate to obtain an interesting tannakian category. He calls the objects of this category essentially finite vector bundles and the category itself is denoted by $EF(X)$, full subcategory of $\mathcal{C}oh(X)$, the category of coherent sheaves over $X$: it is thus the abelian category generated by finite vector bundles, their duals, their tensor products and their sub-quotients, full sub-category of the category of Nori semi-stable vector bundles. The following theorem summarize what we just said:

\begin{theorem}\label{teoNori1}
The category $EF(X)$ of essentially finite vector bundles endowed with the trivial object $\mathcal{O}_X$, the tensor product $\otimes_{\mathcal{O}_X}$ and the fibre functor $x^*$ is a tannakian category over $k$.
\end{theorem}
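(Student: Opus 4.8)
The plan is to verify that the datum $(EF(X),\otimes_{\mathcal{O}_X},\mathcal{O}_X,x^*)$ satisfies the axioms of a neutral Tannakian category over $k$ in the Deligne--Milne form: $EF(X)$ must be an essentially small abelian $k$-linear category, rigid symmetric monoidal with unit $\mathcal{O}_X$ and $\mathrm{End}(\mathcal{O}_X)=k$, and $x^*$ must be an exact, faithful, $k$-linear tensor functor to the category of finite-dimensional $k$-vector spaces. I would isolate the few genuinely geometric inputs and then assemble the rest formally.

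The geometric inputs are three. (i) $\mathrm{End}_{\mathcal{O}_X}(\mathcal{O}_X)=H^0(X,\mathcal{O}_X)$, and since $X$ is reduced, connected and proper over the perfect field $k$ and carries the $k$-point $x$, this ring is a finite reduced $k$-algebra with no nontrivial idempotents, hence a field, which admits a $k$-algebra map to $k$ and therefore equals $k$. (ii) Nori's observation recalled above — that every finite bundle is Nori semistable — which reduces to the classical fact that a finite vector bundle on a smooth proper curve is semistable of degree $0$. (iii) The key point: $\mathcal{N}(X)$, the category of Nori semistable bundles, is abelian with exact inclusion $\mathcal{N}(X)\hookrightarrow\mathcal{C}oh(X)$, and is stable under $\oplus$, $\otimes_{\mathcal{O}_X}$ and $(-)^\vee$. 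On a smooth proper curve this is classical: kernel, image and cokernel of a morphism of slope-$0$ semistable sheaves are again slope-$0$ semistable with the image saturated, the dual of a semistable degree-$0$ bundle is again such, and so is the tensor product of two of them — in positive characteristic the last statement uses that a Nori semistable bundle is strongly semistable of degree $0$ on every curve, together with the Ramanan--Ramanathan theorem. The work is then to propagate "locally free" and "Nori semistable" from the many curves $j\colon C\to X$ back to $X$, using that $X$ is reduced and proper.

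Granting (i)--(iii), the conclusion is formal. By construction $EF(X)$ is the smallest full subcategory of $\mathcal{C}oh(X)$ containing the finite bundles and closed under $\oplus$, $\otimes_{\mathcal{O}_X}$, $(-)^\vee$ and subquotients; by (ii)--(iii) it sits inside $\mathcal{N}(X)$, is a full abelian subcategory of $\mathcal{C}oh(X)$ closed under kernels and cokernels, is essentially small, and $\mathrm{Hom}_{EF(X)}(V,W)=H^0(X,V^\vee\otimes W)$ is finite-dimensional over $k$ by properness. Rigidity holds because duals of vector bundles are vector bundles, duals of finite bundles are finite, and the usual evaluation and coevaluation morphisms exhibit $V^\vee$ as a dual of $V$; the tensor structure, inherited from $\mathcal{C}oh(X)$, is symmetric monoidal with unit $\mathcal{O}_X$, and $\otimes_{\mathcal{O}_X}$ is exact on vector bundles. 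Finally $x^*$ is $k$-linear and visibly monoidal, with $x^*(V\otimes W)=x^*V\otimes x^*W$ naturally; it is exact because every short exact sequence in $EF(X)$ is one of vector bundles and $x^*$ is exact on vector bundles; and it is faithful because if $\varphi\colon V\to W$ in $EF(X)$ has $x^*\varphi=0$, then $\mathrm{im}(\varphi)$ lies in $EF(X)$, hence is a vector bundle, and $x^*(\mathrm{im}\,\varphi)$ is at once a quotient of $x^*V$ and a subspace of $x^*W$ on which the induced map vanishes, so $x^*(\mathrm{im}\,\varphi)=0$; thus $\mathrm{im}(\varphi)$ has rank $0$ at $x$, hence rank $0$ everywhere since $X$ is connected, hence is zero, so $\varphi=0$.

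The main obstacle is input (iii) when $\dim X>1$: on a curve "torsion-free $\Rightarrow$ locally free" makes abelianness and exactness of the inclusion immediate, but on a higher-dimensional $X$ kernels, images and cokernels of morphisms of bundles need not stay within vector bundles, so one cannot simply saturate. The way around it is to exploit the finiteness built into the word "finite": for a finite bundle $V$ the indecomposable summands of all the bundles $(V\oplus V^\vee)^{\otimes n}$ form, up to isomorphism, a finite family closed under $\otimes$ and $(-)^\vee$, so the full subcategory of $EF(X)$ generated by $V$ is equivalent to the representation category of a finite groupoid scheme over $k$ — which is visibly abelian and rigid, and on which the analogue of $x^*$ is visibly exact and faithful — and $EF(X)$ is the filtered union of these subcategories over all finite $V$. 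Everything else in the argument is standard semistability theory on curves or categorical bookkeeping.
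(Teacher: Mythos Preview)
The paper does not prove this theorem: it is stated as a summary of Nori's results, with the reader referred to \cite{N1} and \cite{N2}. So there is no in-paper argument to compare against beyond Nori's original one.

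Your outline follows Nori's strategy and is largely sound: embed $EF(X)$ in the category $\mathcal{N}(X)$ of Nori-semistable bundles, check that $\mathcal{N}(X)$ is abelian with exact inclusion in $\mathcal{C}oh(X)$ and closed under $\otimes$ and $(-)^\vee$, and then verify the Tannakian axioms formally. Inputs (i) and (ii) are correct, and the deduction in your third paragraph is clean.

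The weak point is the last paragraph. You rightly flag input (iii) in dimension $>1$ as the crux, but the proposed bypass is either circular or hides the same work. Knowing that the indecomposable summands of $(V\oplus V^\vee)^{\otimes n}$ form a finite set closed under $\otimes$ and $(-)^\vee$ gives you only a rigid Karoubian \emph{additive} $\otimes$-category, not an abelian one. Asserting that ``the full subcategory of $EF(X)$ generated by $V$ is equivalent to the representation category of a finite group scheme'' is precisely the Tannakian statement you are trying to establish: if ``generated'' includes subquotients, you must already know those subquotients are locally free, which is exactly (iii); if it means only the additive hull, then the target $\mathrm{Rep}(G)$ is abelian while the source is not, so the equivalence makes no sense as stated. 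One \emph{can} make this route work by directly constructing the finite $G$-torsor trivialising the $V_i$ and then identifying the resulting $\mathrm{Rep}(G)$ inside $\mathcal{C}oh(X)$, but that construction is itself the substantial step and is not ``visible''. Nori's own argument does not sidestep (iii): he proves directly that for $V,W\in\mathcal{N}(X)$ and any $f\colon V\to W$ the sheaves $\ker f$, $\mathrm{im}\,f$, $\mathrm{coker}\,f$ are again locally free and Nori-semistable, using the curve criterion together with reducedness and connectedness of $X$. That lemma is the genuine content; once you have it, your third paragraph finishes the proof.
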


The duality theorem for tannakian categories (that we are tacitly assuming to be neutral) says that to $(EF(X),\mathcal{O}_X,\otimes_{\mathcal{O}_X},x^*)$ we can naturally associate an affine $k$-group scheme that we denote by $\pi(X,x)$. This is what we call the \emph{fundamental group scheme of $X$ in $x$.} Though in general a group scheme over a field is pro-algebraic (\cite{Wa}, \S 3.3) Nori proves that $\pi(X,x)$ is pro-finite.  Furthermore (cf. for instance \cite{DelMil}, Theorem 3.2) this theory provides a $\pi(X,x)$-torseur $\widehat{X}$, pointed in $\widehat{x}\in \widehat{X}_x(k)$ which is universal in the following sense: let $G$ be any finite $k$-group scheme and $Y\to X$ any $G$-torsor, pointed in $y\in Y_x(k)$, then there exists a unique $X$-morphism $\widehat{X}\to Y$ of torsors (i.e. commuting with the actions of $\pi(X,x)$ and $G$) sending  $\widehat{x}\mapsto y$. So we see that \emph{all} the finite torsors over $X$ are \emph{classified} by the fundamental group scheme, which is thus the object \emph{dreamt} by Grothendieck. When $k$ is algebraically closed then the group of rational points $\pi(X,x)(k)$ of $\pi(X,x)$   coincide with the étale fundamental group $\pi^{\text{\'et}}(X,x)$. We conclude this section with a recent result, obtained in \cite{BV} by Niels Borne and Angelo Vistoli who provide a very nice, useful and surprisingly simple  characterization  for essentially finite vector bundles:

\begin{proposition}Let $k$, $X$ and $x$ like before then a vector bundle $V$ on $X$ is essentially finite if and only if it is the kernel of a morphism between finite vector bundles.
\end{proposition}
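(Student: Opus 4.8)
The plan is to prove the two implications separately, putting essentially all the work into the ``only if'' direction and reducing it to the representation theory of a single finite group scheme. Since $\pi(X,x)$ is pro-finite it is the projective limit of its finite quotients, and a finite-dimensional representation of it factors through one of them; concretely, an essentially finite bundle $V$ of rank $r$ is the vector bundle associated, via a suitable pointed $G$-torsor $f\colon Y\to X$ arising as a quotient of the universal torsor $\widehat{X}$, to a representation $\rho\colon G\to GL_r$ of a finite $k$-group scheme $G$. This data provides a $k$-linear exact tensor functor $\mathrm{Rep}_k(G)\to EF(X)$ carrying $\rho$ to $V$, and the strategy is to exhibit $V$ as a kernel of a map between finite bundles already at the level of $\mathrm{Rep}_k(G)$ and then transport the construction along this functor.

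The one nontrivial input is that the regular representation $R=k[G]$ of $G$ yields a \emph{finite} vector bundle $\mathcal{R}:=f_*\mathcal{O}_Y$ on $X$. This comes from the standard isomorphism of $G$-modules $R\otimes_k M\cong R\otimes_k M_0$, valid for every $M$, where $M_0$ is the underlying vector space of $M$ with the trivial action; taking $M=R$ gives $R^{\otimes 2}\cong R^{\oplus n}$ with $n=\dim_k R$, hence $\mathcal{R}^{\otimes 2}\cong\mathcal{R}^{\oplus n}$, so $\mathcal{R}$ is finite (take $f(t)=t^2$ and $g(t)=nt$). In particular, for every $M\in\mathrm{Rep}_k(G)$ the bundle associated to $R\otimes_k M_0\cong R^{\oplus\dim M}$ is a direct sum of copies of $\mathcal{R}$, hence again finite.

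With this in hand I would argue as follows. The counit and coassociativity axioms for the $k[G]$-comodule $M$ say precisely that the coaction $M\to M_0\otimes_k R$ is an injective morphism of $G$-modules; transporting along the functor above, every essentially finite bundle $V$ embeds into a finite bundle $E_1$ (one may take $E_1=\mathcal{R}^{\oplus r}$). The cokernel $Q:=E_1/V$ is again a vector bundle, and it is essentially finite, being a quotient of a finite bundle inside the abelian category $EF(X)$; applying the same construction to $Q$ embeds it into a finite bundle $E_2$. Composing the surjection $E_1\twoheadrightarrow Q$ with the inclusion $Q\hookrightarrow E_2$ then yields a morphism $\phi\colon E_1\to E_2$ of finite vector bundles with $\ker\phi=V$. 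For the reverse implication I would invoke that finite bundles are essentially finite and that $EF(X)$ is an abelian subcategory of $\mathcal{C}oh(X)$ in which kernels are computed sheaf-theoretically: if $\phi\colon E_1\to E_2$ is a morphism of finite bundles whose kernel happens to be locally free, that kernel is also the kernel of $\phi$ taken in $EF(X)$, hence essentially finite.

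The main obstacle I anticipate is not any of the constructions above but the bookkeeping surrounding the abelian structure of $EF(X)$: one must check that the cokernel $Q=E_1/V$ and the kernel $\ker\phi$ really are vector bundles and that they coincide with the corresponding sheaf-theoretic operations, so that the exact tensor functor $\mathrm{Rep}_k(G)\to EF(X)\subset\mathcal{C}oh(X)$ genuinely sends the kernel computed in $\mathrm{Rep}_k(G)$ to the sheaf kernel on $X$. Once this compatibility is granted---it is built into Nori's construction of $EF(X)$---the entire proof rests on the finiteness of $\mathcal{R}$ together with the tautological embedding $M\hookrightarrow R\otimes_k M_0$.
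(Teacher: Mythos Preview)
The paper does not actually prove this proposition; it simply quotes it as a result of Borne and Vistoli and gives the reference \cite{BV}. There is therefore no proof in the paper to compare your proposal against.

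That said, your argument is correct and is the standard tannakian proof of this characterisation. The key observation---that the regular representation satisfies $R^{\otimes 2}\cong R^{\oplus n}$ and hence $f_*\mathcal{O}_Y$ is a finite vector bundle---is exactly the right engine, and the embedding $M\hookrightarrow M_0\otimes_k R$ via the coaction is the natural move. Your worry about bookkeeping is legitimate but easily disposed of: since you carry out the entire construction inside $\mathrm{Rep}_k(G)$ and then transport along the associated-bundle functor for the faithfully flat torsor $Y\to X$, which is exact, all the kernels and cokernels you form are automatically locally free and agree with their sheaf-theoretic counterparts. In particular $Q=E_1/V$ is a vector bundle because it is the bundle associated to the $G$-module $(M_0\otimes_k R)/M$. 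For the ``if'' direction your reasoning is also fine: $EF(X)$ is by construction closed under subobjects inside the abelian category of Nori semi-stable bundles, finite bundles lie in $EF(X)$, and a locally free sheaf-theoretic kernel of a map of semi-stable bundles of degree $0$ is again semi-stable of degree $0$, hence is the categorical kernel and lies in $EF(X)$.

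For comparison, the original proof in \cite{BV} is phrased in the language of the Nori fundamental gerbe rather than the group scheme $\pi(X,x)$, but the underlying mechanism---trivialisation after pullback to a finite torsor and the finiteness of the pushforward of the structure sheaf---is the same as yours.
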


%  Au foncteur  $$Rep_k(\pi(X,x))\simeq EF(X)\hookrightarrow Qcoh(X)$$ on associe de façon naturelle un   $\pi(X,x)$-torseur au dessus de $X$; pour voir ça on peut par exemple procéder comme suit:  à une représentation $V$ de  $\pi(X,x)$ on associe à travers l'équivalence de catégories tannakiennes $Rep_k(\pi(X,x))\simeq EF(X)$ 
%
%
%Ce torseur est appelé le $\pi(X,x)$-torseur universel.

\subsection{Description as pro-finite limit}
\label{sez:SGF12}       
There is a second construction for the fundamental group scheme of a scheme $X$ defined over a field $k$, again due to Nori (cf.\cite{N2}) and certainly inspired by the universal property satisfied by the universal torsor over $X$, as recalled in \ref{sez:SGF12}. This new construction is easier than the previous one, we do not use vector bundles or tannakian theory so it is in general more understandable by a wider audience. The assumptions on $X$ can be weakened a lot and at some point some people started to believe that the field $k$ could be replaced by a discrete valuation ring or, more generally, by a Dedekind scheme, but this will be the object of \S \ref{sez:Dede}. In this section $X$ will be a reduced and conected scheme defined over any field $k$ and $x\in X(k)$ will be a $k$-rational point. We consider the category $\mathcal{F}(X)$ where objects are finite torsors (i.e. their structural group is a $k$-finite group scheme) over $X$, pointed over $x$ and morphisms are morphisms commuting with the actions of their structural groups and sending a marked point to a marked point. We will often write $(Y,G,y)$ for a $G$-torsor $Y\to X$ pointed at $y$. Before stating the main result for this section let us briefly recall the notion of cofiltered category: a category $\mathcal{C}$ is cofiltered if it is non empty and if the following axioms are satisfied:
\begin{enumerate}
\item for any two objects  $A,B$ of $\mathcal{C}$ there exists an object $C$ of $\mathcal{C}$ and two morphisms $c_A:C\to A$ and $c_B:C\to B$;
\item for any two objects  $A,B$ for any two morphisms  $c_1:A\to B$ and $c_2:A\to B$ there exists an object $U$ of $\mathcal{C}$ and a morphism $u:U\to A$ such that $c_1\circ u=c_2\circ u$.
\end{enumerate}

We also observe that if the category  $\mathcal{C}$ has a final object and if for any triple of objects $A,B, C$ and for any pair of morphisms $f:A\to C$, $g:B\to C$ there exists, in $\mathcal{C}$, the fibre product $A\times_C B$ then the category $\mathcal{C}$ is cofiltered\footnote{Alternatively if $\mathcal{C}$ has a final object $U$, if there exist finite products (i.e. fibre products of objects over $U$) and if for any triple of objects $A,B, C$ and for any pair of morphisms $f:A\to C$, $g:B\to C$ there exists, in $\mathcal{C}$, a forth object  $D$ in $\mathcal{C}$ closing the diagram, then the category $\mathcal{C}$ is cofiltered}. This is a nice exercise left to the reader. We can now state the main result of this section, due to Nori:

\begin{theorem}\label{teoNori2}
Let $X$ be a reduced and connected scheme over any field $k$ and let $x\in X(k)$ be a $k$-rational point. Then the category $\mathcal{F}(X)$ is cofiltered.
\end{theorem}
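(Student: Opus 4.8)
The plan is to verify directly the two axioms defining a cofiltered category. First, $\mathcal{F}(X)$ is non-empty and in fact has a final object, namely the trivial torsor $(X,Spec(k),x)$ with $X\to X$ the identity and $Spec(k)$ the trivial $k$-group scheme: for any $(Y,G,y)$ the structure morphism $Y\to X$, together with the unique homomorphism $G\to Spec(k)$, is the unique morphism of pointed torsors $(Y,G,y)\to(X,Spec(k),x)$, since $y$ lies over $x$. (By the criterion recalled just before the statement it would then even suffice to produce fibre products, but checking (1) and (2) is just as convenient.)

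Axiom (1) is easy. Given $(Y_1,G_1,y_1)$ and $(Y_2,G_2,y_2)$, the fibre product $Y_1\times_XY_2\to X$ is a torsor under the finite $k$-group scheme $G_1\times_kG_2$: it is faithfully flat, since $Y_1\times_XY_2\to Y_2\to X$ is a composite of faithfully flat morphisms, and its action map $(Y_1\times_XY_2)\times_k(G_1\times_kG_2)\to(Y_1\times_XY_2)\times_X(Y_1\times_XY_2)$ is an isomorphism because, after rearranging factors, it is the fibre product over $X$ of the action isomorphisms of $Y_1$ and of $Y_2$. It is pointed at $(y_1,y_2)$ over $x$, hence an object of $\mathcal{F}(X)$, and the two projections, together with the projections $G_1\times_kG_2\to G_i$, give morphisms onto $(Y_1,G_1,y_1)$ and $(Y_2,G_2,y_2)$.

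Axiom (2) carries the real content. Given morphisms $c_i=(\phi_i,\psi_i)\colon(Y_1,G_1,y_1)\to(Y_2,G_2,y_2)$ for $i=1,2$, I would form $(\phi_1,\phi_2)\colon Y_1\to Y_2\times_XY_2$ and let $U:=(\phi_1,\phi_2)^{-1}(\Delta_{Y_2})$ be the scheme-theoretic preimage of the diagonal $\Delta_{Y_2}\subseteq Y_2\times_XY_2$, i.e. the equalizer of $\phi_1$ and $\phi_2$; it is a closed subscheme of $Y_1$ which contains $y_1$, since $\phi_1(y_1)=y_2=\phi_2(y_1)$. It is moreover stable under the closed subgroup scheme $H:=(\psi_1,\psi_2)^{-1}(\Delta_{G_2})\subseteq G_1$ equalizing $\psi_1$ and $\psi_2$, because if $\phi_1(v)=\phi_2(v)$ and $\psi_1(h)=\psi_2(h)$ then $\phi_i(vh)=\phi_i(v)\psi_i(h)$ forces $\phi_1(vh)=\phi_2(vh)$. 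If one knows that $(U,H,y_1)$ — or a suitable sub-torsor of $Y_1$ through $y_1$ contained in $U$ — is an object of $\mathcal{F}(X)$, then the inclusion $u\colon U\hookrightarrow Y_1$ satisfies $c_1\circ u=c_2\circ u$ by construction, which is exactly axiom (2).

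So the theorem reduces to the following reduction-of-structure-group statement, which I expect to be the main obstacle: if $X$ is reduced and connected, $(Y,G,y)$ is a pointed torsor with $G$ finite, and $Z\subseteq Y$ is a closed subscheme containing $y$, stable under a closed subgroup scheme $H\subseteq G$, whose fibre $Z_x$ over $x$ is an $H$-torsor over $k$, then $Z$ — possibly after replacing it by the connected component of $Z$ containing $y$ and $H$ by the subgroup scheme stabilising that component — is a sub-$H$-torsor of $Y$, i.e. $Z\to X$ is faithfully flat and $H$ acts simply transitively on its fibres. For $Z=U$ this is precisely what is missing in (2): using the rational point $y_1$ to trivialise $(Y_1)_x\cong G_1$ one sees at once that $U_x$ is the subscheme $H$ of $G_1$, hence an $H$-torsor over $k$. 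The delicate part is propagating this from the fibre over $x$ to all of $X$: here one uses that, $G$ being finite, every torsor occurring is finite over $X$, so its connected components are open and closed and, $X$ being connected, each surjects onto $X$ — which allows one to argue component by component; that the rational point $x$ both singles out a distinguished component and trivialises the torsors over $x$; and that, working fppf-locally on $X$ where $Y$ trivialises and $Z$ becomes a union of cosets of $H$, reducedness and connectedness of $X$ make the relevant difference morphisms $X\to G$ constant, so that the coset structure seen over $x$ is forced on the whole of $X$ — this last point, the flatness of $Z$ over $X$, being the crux. Once this lemma is in hand, axioms (1) and (2) are verified and $\mathcal{F}(X)$ is cofiltered.
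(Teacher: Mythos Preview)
Your strategy is correct in outline but takes a more circuitous path than the paper's. The paper does not verify axioms (1) and (2) separately; instead it invokes precisely the criterion you mention and then set aside: given any diagram $(Y_1,G_1,y_1)\to(Y_0,G_0,y_0)\leftarrow(Y_2,G_2,y_2)$ in $\mathcal{F}(X)$, it takes as candidate the naive fibre product $(Y_1\times_{Y_0}Y_2,\,G_1\times_{G_0}G_2,\,y_1\times_{y_0}y_2)$ and argues that this triple is again an object of $\mathcal{F}(X)$. Over a field $G_1\times_{G_0}G_2$ is automatically finite and flat, so the only genuine difficulty is showing that $Y_1\times_{Y_0}Y_2\to X$ is faithfully flat; this is where reducedness and connectedness of $X$ enter (and is what Nori actually proves). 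Once fibre products exist, cofilteredness follows immediately from the criterion.

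Your route handles (1) via the product $Y_1\times_XY_2$---which is just the paper's construction with $Y_0=X$---and then (2) via an equalizer construction, which you yourself reduce to a ``reduction-of-structure-group'' lemma. That lemma is essentially the same statement the paper needs: a closed $H$-stable subscheme of a $G$-torsor, which is an $H$-torsor over the marked point, is an $H$-torsor everywhere. You package it for the equalizer $U\subset Y_1$, the paper for $Y_1\times_{Y_0}Y_2\subset Y_1\times_XY_2$; the analytic content (propagating flatness from one fibre using reducedness and connectedness) is identical. So nothing is gained by splitting into (1) and (2): the hard step is the same either way, and the paper reaches it with less scaffolding. Your hedge ``possibly after replacing $Z$ by a connected component and $H$ by its stabiliser'' is also a symptom of this detour---in the paper's fibre-product formulation no such adjustment is needed, since once $Y_1\times_{Y_0}Y_2$ is shown faithfully flat it \emph{is} the torsor.
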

\proof The strategy is the following: we take three objects of $\mathcal{F}(X)$  $(Y_i,G_i,y_i), i=0, ..., 2$ and two morphisms like in the following diagram
$$\xymatrix{(Y_1,G_1,y_1)\ar[rd] & & (Y_2,G_2,y_2)\ar[dl]\\ & (Y_0,G_0,y_0) & }$$
and we want to prove that this diagram has a fibre product in $\mathcal{F}(X)$. We chose as a candidate the triple given by $(Y_3,G_3,y_3):=(Y_1\times_{Y_0}Y_2,G_1\times_{G_0}G_2 ,y_1\times_{y_0}y_2 )$ and we need to prove that it is an object of $\mathcal{F}(X)$. It is not trivial to prove that $Y_1\times_{Y_0}Y_2$ is faithfully flat over  $X$ and it is at this point that we use our hypothesis on $X$, that we have assumed to be reduced and connected. Once we prove that $(Y_3,G_3,y_3)$ is a torsor over $X$ we are done.
\endproof

The importance of this result is that we can now wonder whether or not we can compute the projective limit of all the finite pointed torsors over $X$. But since $\mathcal{F}(X)$ is cofiltered and all the morphisms in $\mathcal{F}(X)$ are affine then this limit exists, what we obtain is again a universal triple  $(\tilde{X},\pi(X,x),\tilde{x})$; unsurprisingly we call again the pro-finite $k$-group scheme $\pi(X,x)$ the \emph{fundamental group scheme of $X$ in $x$} and  $\tilde{X}$ the universal $\pi(X,x)$-torsor (pointed at $\tilde{x}$). The fact that no confusion can arise is a consequence of the following

\begin{remark}Let $k$ be a perfect field, $X$ a reduced and connected scheme, proper over $Spec(k)$, endowed with a $k$-rational point such that $H^0(X,\mathcal{O}_X)=k$ then the tannakian construction introduced in \S \ref{sez:SGF11}  and the construction described by the Theorem  \ref{teoNori2} give rise to the same fundamental group scheme $\pi(X,x)$. 
\end{remark}

\section{Over a Dedekind scheme}
\label{sez:Dede}
As we pointed out in \S \ref{sez:SGF12} the pro-finite construction is more likely to be extended to a much more general situation, for instance when we replace the base field $k$ by a Dedekind scheme $S$. First of all, to prevent any ambiguity, throughout the whole paper by Dedekind scheme we mean a normal locally Noetherian connected scheme of dimension $0$ or $1$ (e.g. the spectrum of $\mathbb{Z}$, the spectrum of a discrete valuation ring, the spectrum of a field, ... ). So we have a Dedekind scheme $S$, a scheme $X$ faithfully flat and of finite type over $S$, we fix a section $x\in X(S)$ and we wonder whether we can build a universal torsor \emph{dominating} all the \emph{finite torsors}  over $X$ (i.e. torsors whose structural group is a finite and flat $S$-group scheme). As the case $dim(S)=0$ has already been considered we only study here the case $dim(S)=1$. The question is as before: is the category, that we still denote by $\mathcal{F}(X)$, of finite pointed torsors over $X$ cofiltered? A first answer has been given in \cite{Ga}, but the proof contains a mistake which has been corrected in \cite{AEG}. We thus introduce here some ideas contained in  \cite{AEG}, to which we refer the reader for the complete proofs of the results stated in this section and much more. Whenever we have a $S$-scheme $T$, we will denote by $T_{\eta}$ its generic fibre. We first give an answer to the above question (this is the subject of \cite{AEG}, \S 4 and \S 5.1):

\begin{theorem}\label{teoCofilDed}
Let $S$, $X$ and $x$ like before. Let us moreover assume that one of the following assumptions is satisfied:
\begin{enumerate}

\item for every $s\in S$ the fibre $X_s$ is reduced; 
\item for every $z\in X\backslash X_{\eta}$ the local ring $\mathcal{O}_{X,z}$ is integrally closed.
\end{enumerate}
Then the category $\mathcal{F}(X)$ is cofiltered. 
\end{theorem}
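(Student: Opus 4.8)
The plan is to mimic the strategy of Theorem \ref{teoNori2}: given three objects $(Y_i,G_i,y_i)$, $i=0,1,2$, of $\mathcal{F}(X)$ with morphisms $(Y_1,G_1,y_1)\to(Y_0,G_0,y_0)\leftarrow(Y_2,G_2,y_2)$, the candidate for the fibre product is again the triple $(Y_3,G_3,y_3):=(Y_1\times_{Y_0}Y_2,\ G_1\times_{G_0}G_2,\ y_1\times_{y_0}y_2)$, with its natural action. The group $G_3=G_1\times_{G_0}G_2$ is a closed subgroup scheme of $G_1\times_k G_2$, hence finite and flat over $S$, and the pointing $y_3$ is immediate. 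One checks on the generic fibre that $(Y_3)_\eta\to X_\eta$ is a $G_3$-torsor exactly as in Nori's case, since $X_\eta$ is reduced and connected (it is geometrically connected after possibly passing to $H^0$, but in any event the field case applies fibrewise over the generic point). The only genuine issue, exactly as flagged in the sketch of Theorem \ref{teoNori2}, is to show that $Y_3\to X$ is faithfully flat; once this is known, the torsor axioms follow formally because $Y_3\times_X Y_3\cong Y_3\times_S G_3$ can be checked after the faithfully flat base change $Y_3\to X$, where everything trivialises.

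So the heart of the argument is: \emph{$Y_1\times_{Y_0}Y_2$ is flat over $X$.} Here is where hypotheses (1) and (2) enter. We already know $Y_3\to X$ is flat over the generic fibre $X_\eta$ (Nori's theorem over the field $k(\eta)$), and each $Y_i\to X$ is finite flat, so $Y_3\to X$ is at least quasi-finite and of finite type, and flat in a neighbourhood of $X_\eta$. The problem is purely local along the closed fibres. I would argue as follows. Since $Y_i\to X$ is a torsor under a finite flat group scheme, it is in particular finite, flat and surjective, so each $Y_i$ inherits good properties from $X$: if every fibre $X_s$ is reduced then, $X$ being flat over the Dedekind scheme $S$ with reduced fibres, $X$ is reduced, and likewise each $Y_i$ is flat over $S$ with reduced fibres (torsors under flat finite group schemes have reduced fibres when the base does, over a reduced base — this uses that $Y_i\to X$ is flat and the fibres of $G_i$ over points of $S$ are... — one must be slightly careful here in characteristic $p$, but the key point needed is only that $Y_3$ has no embedded components). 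Under hypothesis (2) one argues instead that $Y_i$, being finite over $X$ which is $S_2$-ish and normal in codimension $\le 1$ away from $X_\eta$, together with $Y_i$ normal, lets one apply a valuative / Serre-criterion argument.

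Concretely, the mechanism I expect to use is: \emph{flatness over a Dedekind scheme plus a fibrewise criterion.} A finite-type morphism $f:Z\to X$ with $X$ flat over $S$ of dimension $1$ is flat provided $Z$ is flat over $S$ and, for the closed point $s$, the fibre $f_s:Z_s\to X_s$ is flat — this is the standard "flatness by fibres over a Dedekind base" (e.g. via the local criterion, since $\mathcal{O}_{S,s}$ is a DVR). Applying this with $Z=Y_3=Y_1\times_{Y_0}Y_2$: flatness of $Y_3$ over $S$ follows because $Y_3\hookrightarrow Y_1\times_S Y_2$ is a closed subscheme whose generic fibre is dense — but one must show $Y_3$ has no $S$-torsion, i.e. is $S$-flat; this is exactly where "reduced fibres" (case 1) or "integrally closed local rings off the generic fibre" (case 2) is used, to rule out embedded/torsion components of $Y_3$ supported on closed fibres. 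Once $Y_3$ is $S$-flat, we are reduced to checking flatness of $(Y_3)_s=(Y_1)_s\times_{(Y_0)_s}(Y_2)_s\to X_s$ over the (now reduced, by hypothesis (1), or normal, by hypothesis (2)) scheme $X_s$ — and this is Nori's field-case theorem, Theorem \ref{teoNori2}, applied to each connected component of $X_s$ over the residue field $k(s)$ (or to $(Y_i)_s$, which are torsors over $X_s$ by base change). Thus the faithful flatness of $Y_3\to X$ is established, and surjectivity is clear since it is surjective on $X_\eta$ and on each $X_s$.

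Finally, with $Y_3\to X$ faithfully flat of finite type and $G_3$ finite flat over $S$ acting, the torsor condition $Y_3\times_X Y_3\xrightarrow{\sim}Y_3\times_S G_3$ is checked after base change along the faithfully flat map $Y_3\to X$, over which each $Y_i$ trivialises; hence $(Y_3,G_3,y_3)\in\mathcal{F}(X)$ and it is readily seen to be the fibre product of the given diagram. Since $\mathcal{F}(X)$ plainly has the trivial torsor $(X,e,x)$ as a final object, the existence of these fibre products gives cofilteredness by the criterion recalled before Theorem \ref{teoNori2}. The main obstacle, as indicated, is controlling the behaviour of $Y_1\times_{Y_0}Y_2$ along the closed fibres of $S$ — i.e., excluding pathological non-flat behaviour there — and it is precisely for this that the two alternative hypotheses (reduced fibres; or integrally closed local rings outside the generic fibre) are imposed; the detailed verification is carried out in \cite{AEG}, \S 4--5.1.
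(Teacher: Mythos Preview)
Your proposal contains a genuine error at the very first step: you assert that $G_3 = G_1\times_{G_0}G_2$, being a closed subgroup scheme of the finite flat $G_1\times_S G_2$, is itself finite and flat over $S$. This is false. Over a Dedekind base a closed subscheme of a finite flat scheme is flat if and only if its coordinate ring is $S$-torsion-free, and kernels (hence fibre products) of morphisms between finite flat group schemes routinely acquire torsion. A concrete instance over a DVR $R$ with uniformiser $\pi$ and residue characteristic $p$: the kernel of a suitable morphism between finite flat $R$-group schemes of order $p$ is $\operatorname{Spec} R[x]/(x^p,\pi x)$, which is finite but not $R$-flat (trivial generic fibre, special fibre of order $p$). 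This is precisely the obstruction the paper isolates: the naive triple $(Y_1\times_{Y_0}Y_2,\, G_1\times_{G_0}G_2,\, y_1\times_{y_0}y_2)$ is in general \emph{not} an object of $\mathcal{F}(X)$, so your entire flatness discussion for $Y_3$ is aimed at the wrong object.

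The paper's remedy is a construction absent from your argument. One regards the generic-fibre triple $(Y_{1,\eta}\times_{Y_{0,\eta}}Y_{2,\eta},\, G_{1,\eta}\times_{G_{0,\eta}}G_{2,\eta},\ldots)$ as a closed subtorsor of the honest fpqc torsor $(Y_1\times_X Y_2,\, G_1\times_S G_2,\ldots)$ and takes its \emph{Zariski closure} inside the latter. Over a Dedekind base the schematic closure of anything supported over the generic point is automatically $S$-flat, which disposes of the flatness of the group for free; the real work (Lemma~\ref{Lemma22}, proved in \cite{AEG}) is to show that this closure is again a torsor over $X$, and it is in that step that hypotheses (1) or (2) enter. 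Incidentally, several of your auxiliary claims are also false: a torsor under an infinitesimal group scheme such as $\alpha_p$ or $\mu_p$ over a reduced base need not be reduced, and $Y_i$ need not be normal when $X$ is, so the ``good properties inherited by $Y_i$'' that you rely on are unavailable.
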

\proof
The strategy being the same as for the field case, we take three objects of $\mathcal{F}(X)$  $(Y_i,G_i,y_i), i=0, ..., 2$ and two morphisms as follows
$$\xymatrix{(Y_1,G_1,y_1)\ar[rd] & & (Y_2,G_2,y_2)\ar[dl]\\ & (Y_0,G_0,y_0) & }$$
and we want to prove that this diagram has a fibre product in $\mathcal{F}(X)$. Unfortunately in general the triple given by $(Y_1\times_{Y_0}Y_2,G_1\times_{G_0}G_2 ,y_1\times_{y_0}y_2 )$ is not a good candidate as $G_1\times_{G_0}G_2$ may easily be not flat (though finite) over $S$. But we can hopefully do the following: we can consider $(Y_1\times_X Y_2,G_1\times_S G_2 ,y_1\times_S y_2 )$ (which is certainly a torsor), its generic fibre (which is a torsor over $X_{\eta}$), the natural closed immersion  $$Y_{1,\eta}\times_{Y_{0,\eta}}Y_{2,\eta}\hookrightarrow Y_{1,\eta}\times_{X_{\eta}}Y_{2,\eta}$$ (which is a morphism of torsors, we simply omitted the structural groups and the marked points to ease the notation) and we need to hope that the Zariski closure of $Y_{1,\eta}\times_{Y_{0,\eta}}Y_{2,\eta}$ in $Y_1\times_X Y_2$ is a torsor under the action of the $S$-finite and flat group scheme obtained as Zariski closure of $G_{1,\eta}\times_{G_{0,\eta}}G_{2,\eta}$ in $G_1\times_S G_2$; it turns out to be the good candidate we were looking for and this follows from Lemma \ref{Lemma22}.
\endproof

\begin{lemma}\label{Lemma22} Notations are those of Theorem \ref{teoCofilDed}. Let us moreover assume that one of the assumptions given in Theorem \ref{teoCofilDed} is satisfied. Let $(Y,G,y)$ be an object of $\mathcal{F}(X)$ and $(T,H,t)$ an object of $\mathcal{F}(X_{\eta})$ contained in  $(Y_{\eta},G_{\eta},y_{\eta})$ (i.e. we are considering a morphism $(T,H,t)\to (Y_{\eta},G_{\eta},y_{\eta})$ whose induced morphism on the group schemes $H\to G_{\eta}$ is a close immersion). Then the triple $(\overline{T},\overline{H},\overline{t})$ obtained as the Zariski closure of $(T,H,t)$ in $(Y_{\eta},G_{\eta},y_{\eta})$ is an object of $\mathcal{F}(X)$.
\end{lemma}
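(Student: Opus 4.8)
The plan is to show that $\overline{T}\to X$ is an fpqc torsor under the finite flat $S$-group scheme $\overline{H}$, where $\overline{H}$ is the Zariski closure of $H$ in $G_\eta\hookrightarrow G$ (equivalently the scheme-theoretic closure of the image of $H$ in $G$; since $G$ is finite flat over $S$ and $S$ is Dedekind, any closed subscheme of $G$ flat over $S$ is automatically finite flat, and the closure of a subgroup scheme of the generic fibre is a subgroup scheme — this is the standard ``Zariski closure'' construction over a Dedekind base, see \cite{AEG}). First I would record that $\overline{T}$ is flat over $S$ by construction (it is the closure of a subscheme of the generic fibre inside the $S$-flat scheme $Y$, hence $S$-flat, being reduced-free of vertical components by the usual torsion-free argument), and that it is of finite type over $X$; the marked point $\overline{t}$ is the section of $\overline{T}_x\to \{x\}$ obtained as the closure of $t$, which lands in $\overline{T}$ because closure commutes with the flat base change $\operatorname{Spec}\mathcal O_{X,x}\hookrightarrow X$ along the section (here one uses $x\in X(S)$, so $x$ meets every fibre).

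The heart of the matter is to check the torsor axiom: that the action map
\[
\overline{H}\times_S \overline{T}\;\longrightarrow\; \overline{T}\times_X \overline{T},\qquad (h,p)\mapsto (hp,p)
\]
is an isomorphism, and that $\overline{T}\to X$ is faithfully flat. The strategy is to use that both sides are closures of their generic fibres. On the generic fibre this map is precisely the torsor isomorphism for $(T,H,t)$ over $X_\eta$, which holds by hypothesis. One then argues that forming Zariski closures is compatible with the relevant fibre products: $\overline{H}\times_S\overline{T}$ is the scheme-theoretic closure of $H\times_{k(\eta)}T$ inside $G\times_S Y$ (a product of $S$-flat schemes, so the closure of the generic fibre is just the $S$-flat part, and one checks $\overline{H}\times_S\overline{T}$ is already $S$-flat since $\overline{H}$ is finite \emph{flat} — this is exactly where finite flatness of $\overline{H}$, not merely finiteness, is used), and similarly $\overline{T}\times_X\overline{T}$ is the closure of $T\times_{X_\eta}T$ inside $Y\times_X Y$. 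Granting these identifications, the action map, being a map of $S$-flat schemes restricting to an isomorphism over the dense open $X_\eta$ and a closed immersion (or finite) globally, is forced to be an isomorphism by a density/flatness argument (a morphism of flat $S$-schemes of finite type which is an isomorphism on the generic fibre and a closed immersion is an isomorphism onto its scheme-theoretic image, and here the image is everything since it contains the generic fibre and the target is $S$-flat hence has no embedded/vertical components — this last step is where one of the two hypotheses, reduced fibres of $X$ or normality of the local rings $\mathcal O_{X,z}$ at points above the closed fibres, gets invoked, exactly as in Theorem~\ref{teoCofilDed}).

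The main obstacle, and the place where the hypotheses genuinely enter, is proving that $\overline{T}$ is \emph{faithfully flat over $X$}: flatness over $S$ is cheap, but flatness over $X$ is not automatic because $X$ itself may be badly behaved along its special fibres. The idea is that $\overline{T}\to X$ is flat over the open set where $X$ is nice (certainly over $X_\eta$, and over the locus where $\overline{T}$ is already seen to be a torsor by the generic-fibre data spreading out), and that the hypotheses on $X$ — either $X_s$ reduced for all $s$, or $\mathcal O_{X,z}$ integrally closed for $z$ outside the generic fibre — are precisely what allow one to conclude flatness at the remaining points, via the criterion that a finite morphism to a reduced (or normal) scheme which is flat over a dense open and has the right fibre dimensions is flat (one uses that $\overline{T}$ is the closure of a torsor, so its fibres over $X$ have the expected rank $= \operatorname{rk}\overline{H}$ generically, and the reduced/normal hypothesis forbids the rank from jumping on a component or at a codimension-one point). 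Once faithful flatness over $X$ is in hand, descent gives that $\overline{H}$ acts freely with quotient $X$, the remaining torsor axioms follow from the generic-fibre statement by the closure-compatibility above, and the pointing $\overline{t}$ makes $(\overline{T},\overline{H},\overline{t})$ an object of $\mathcal F(X)$, as desired. \qed
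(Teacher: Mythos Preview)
Your approach differs from the paper's (the argument is not spelled out for Lemma~\ref{Lemma22} itself but is exhibited in full in the proof of Theorem~\ref{teoLAST}), and it contains a genuine gap.

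The circularity is here: you assert that $\overline{T}\times_X\overline{T}$ is the scheme-theoretic closure of $T\times_{X_\eta}T$ inside $Y\times_X Y$, i.e.\ that $\overline{T}\times_X\overline{T}$ is $S$-flat. For $\overline{H}\times_S\overline{T}$ this is fine, since a product over $S$ of $S$-flat schemes is $S$-flat. But for a fibre product over $X$ there is no such statement: $\overline{T}\times_X\overline{T}$ is $S$-flat as soon as $\overline{T}\to X$ is flat, and that is precisely what you are trying to prove. So your ``closure-compatibility'' step for the target of the action map presupposes the conclusion. Relatedly, the criterion you invoke for flatness of $\overline{T}\to X$ (``a finite morphism to a reduced or normal scheme which is flat over a dense open and has the right fibre ranks is flat'') is not a theorem: over a normal local ring of dimension $\geq 2$ a finite torsion-free module need not be flat, and constancy of generic rank does not prevent rank jumps on special fibres.

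The paper avoids this by reversing the logic. One first observes only that the canonical map $u:\overline{T}\times_S\overline{H}\to \overline{T}\times_X\overline{T}$ is a \emph{closed immersion} (from the square with $Y\times_S G\xrightarrow{\sim} Y\times_X Y$). Then, using that $\overline{H}$ is $S$-finite and \emph{flat}, one invokes \cite[Exp.~V, Th\'eor\`eme 7.1]{SGA3} to form the fppf quotient $\overline{T}/\overline{H}$, with $p:\overline{T}\to\overline{T}/\overline{H}$ faithfully flat. The map $\overline{T}\to X$ factors through $\lambda:\overline{T}/\overline{H}\to X$, and the whole effort goes into showing that $\lambda$ is an isomorphism: it is proper and quasi-finite, hence finite; it is surjective with dense image; and then the hypothesis ($X_s$ reduced, or $\mathcal{O}_{X,z}$ normal) together with \cite[Lemma 1.3]{WWO} forces $\lambda$ to be an isomorphism. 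Flatness of $\overline{T}\to X$ is now a \emph{consequence} (it equals the faithfully flat $p$), and the torsor identity $u$ an isomorphism follows. If you want to salvage your write-up, replace the direct attack on flatness of $\overline{T}\to X$ by this quotient argument.
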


Exactly like before we thus have a fundamental group scheme $\pi(X,x)$ of $X$ in $x$ and a universal $\pi(X,x)$-torsor over $X$ dominating, by a unique morphism, every finite pointed torsor over $X$.

Working over a base scheme with dimension $>0$ gives us a lot of freedom that we do not have when we are over a field: for example dealing with affine $S$-group schemes we immediately notice that there are many \emph{group objects} over $S$ whose fibres are finite group schemes. Of course there are finite and flat group schemes, which we already considered, but quasi-finite and flat group schemes (not necessarily finite) can also be considered. Finally, if we are brave enough, we can also consider \emph{non} flat quasi-finite group schemes, but for this we need to wait until \S \ref{sez:ogni}. We now conclude this section with a brief overview on the ``quasi-finite and flat'' world: again we have a Dedekind scheme $S$, a scheme $X$ faithfully flat and of finite type over $S$, we fix a section $x\in X(S)$ and we wonder whether we can build a universal torsor \emph{dominating} all the \emph{quasi-finite torsors}  over $X$ (i.e. torsors whose structural group is an affine quasi-finite and flat $S$-group scheme).  We now have a new, but similar, question: is the category, that we denote by $\mathcal{Q}f(X)$, of quasi-finite pointed torsors over $X$ cofiltered? At the end of this section we will explain why it can be useful to work in this new setting; first we state an existence result: 

\begin{theorem}\label{teoCofilDed2}
Let $S$, $X$ and $x$ like before. Let us moreover assume that $X$ is integral and normal and that for each $s\in S$ the fibre $X_s$ is normal and integral. Then the category $\mathcal{Q}f(X)$ is cofiltered. 
\end{theorem}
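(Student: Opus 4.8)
\proof
The plan is to follow the pattern of Theorems \ref{teoNori2} and \ref{teoCofilDed}. First I would reduce the two axioms of a cofiltered category to a single statement, by the criterion recalled before Theorem \ref{teoNori2}: the category $\mathcal{Q}f(X)$ has a final object, namely $X$ equipped with the trivial $S$-group scheme and the marked point $x$, and it has finite products, since for two objects $(Y_1,G_1,y_1)$ and $(Y_2,G_2,y_2)$ the triple $(Y_1\times_X Y_2,\,G_1\times_S G_2,\,y_1\times_S y_2)$ is again an object of $\mathcal{Q}f(X)$ ($G_1\times_S G_2$ is affine, quasi-finite and flat over $S$, and a fibre product of torsors is a torsor). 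It therefore suffices to prove that every diagram
$$\xymatrix{(Y_1,G_1,y_1)\ar[rd] & & (Y_2,G_2,y_2)\ar[dl]\\ & (Y_0,G_0,y_0) & }$$
in $\mathcal{Q}f(X)$ can be completed by a fourth object of $\mathcal{Q}f(X)$ together with morphisms to $(Y_1,G_1,y_1)$ and to $(Y_2,G_2,y_2)$ compatible over $(Y_0,G_0,y_0)$.

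To produce this fourth object I would imitate the proof of Theorem \ref{teoCofilDed} and pass through the generic fibre of $S$. As $X$ is integral, $X_{\eta}$ is an integral scheme over the field $k(\eta)$, so Theorem \ref{teoNori2} applies to it and
$$(T,H,t):=(Y_{1,\eta}\times_{Y_{0,\eta}}Y_{2,\eta},\,G_{1,\eta}\times_{G_{0,\eta}}G_{2,\eta},\,y_{1,\eta}\times_{y_{0,\eta}}y_{2,\eta})$$
is a finite pointed torsor over $X_{\eta}$; here $H$ is a closed subgroup scheme of the finite $k(\eta)$-group scheme $G_{1,\eta}\times_{k(\eta)}G_{2,\eta}$, and $T$ is a closed sub-torsor of $(Y_1\times_X Y_2)_{\eta}$. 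I would then take the schematic closures $\overline{H}$ of $H$ inside $G_1\times_S G_2$ and $\overline{T}$ of $T$ inside $Y_1\times_X Y_2$. Since $G_1\times_S G_2$ and $Y_1\times_X Y_2$ are flat over $S$, the schemes $\overline{H}$ and $\overline{T}$ are flat over $S$ with generic fibres $H$ and $T$; the group structure of $H$ extends uniquely to $\overline{H}$ (schematic closure commutes with fibre products of $S$-flat schemes, so $\overline{H}\times_S\overline{H}$ is the closure of $H\times_{k(\eta)}H$ and the multiplication, unit and inverse extend); the $H$-action on $T$ extends uniquely to an $\overline{H}$-action on $\overline{T}$ over $X$ (by schematic density of the generic fibre in an $S$-flat scheme together with $S$-separatedness); the $S$-section $y_1\times_S y_2$ of $Y_1\times_X Y_2$ over $x$ factors through $\overline{T}$ (its restriction to $\eta$ is $t\in T$, and $\mathcal{O}_S$ is torsion free as $S$ is integral), giving a marked point $\overline{t}$; and composing $\overline{T}\hookrightarrow Y_1\times_X Y_2$ with the two projections gives maps to $Y_1$ and $Y_2$ whose composites to $Y_0$ agree on the schematically dense open $\overline{T}_{\eta}=T$, hence agree. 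So everything comes down to proving that $(\overline{T},\overline{H},\overline{t})$ is an object of $\mathcal{Q}f(X)$, i.e. that $\overline{T}\to X$ is faithfully flat and that $\overline{H}$ makes it an $\overline{H}$-torsor. This is the quasi-finite analogue of Lemma \ref{Lemma22}, and I expect it to be the main obstacle.

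To prove this analogue I would argue fibrewise over $S$. One knows already that $\overline{H}$ is affine, quasi-finite and flat over $S$ and that $\overline{T}$ is flat over $S$, so by the fibrewise criterion of flatness it is enough to show that $\overline{T}_s\to X_s$ is flat for each $s\in S$; over the generic point this is clear since $\overline{T}_{\eta}=T$ is a torsor, so let $s$ be a closed point. As quasi-finiteness of $G_i\to S$ means precisely that its fibres are finite, $G_{i,s}$ is a finite $k(s)$-group scheme, so $Y_{i,s}\to X_s$ is finite, whence $(Y_1\times_X Y_2)_s\to X_s$ is finite and flat and $\overline{T}_s\to X_s$, being a closed subscheme of it, is finite. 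Now $X_s$ is integral, hence reduced and connected, so Theorem \ref{teoNori2} applied over $k(s)$ shows that $(Y_1\times_{Y_0}Y_2)_s\to X_s$ is a torsor under $H'_s:=G_{1,s}\times_{G_{0,s}}G_{2,s}$; since $T$ is the generic fibre of $Y_1\times_{Y_0}Y_2$, the closed subscheme $\overline{T}$ is contained in $Y_1\times_{Y_0}Y_2$ and $\overline{T}_s$ is a closed subscheme of this torsor, on which $\overline{H}_s\subseteq H'_s$ acts. The crux is to show that this inclusion is a reduction of structure group, i.e. that $\overline{T}_s$ is an $\overline{H}_s$-torsor over $X_s$ — equivalently, that the reduction of structure group from $G_{1,\eta}\times_{k(\eta)}G_{2,\eta}$ to $H$ carried by $T$ over $X_{\eta}$ extends over all of $X$ — and this is where the normality (and integrality) of $X$ and of $X_s$ is used: because the ambient torsor $Y_1\times_X Y_2\to X$ is only quasi-finite and not finite, one cannot conclude flatness of $\overline{T}\to X$ directly from constancy of fibre rank over a reduced base as in Lemma \ref{Lemma22}, so one instead controls, by a purity argument over the normal scheme $X$ (resp. $X_s$), the locus where things fail to be flat over $S$, which by flatness of $\overline{T}$ over $S$ is closed and contained in the union of the integral, codimension-one special fibres $X_s$. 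Once the faithful flatness of $\overline{T}\to X$ is granted, the remaining torsor axiom follows by the usual bootstrap: the freeness of the action of $G_1\times_S G_2$ on $Y_1\times_X Y_2$ realizes both $\overline{H}\times_S\overline{T}$ and $\overline{T}\times_X\overline{T}$ as closed subschemes of $(Y_1\times_X Y_2)\times_X(Y_1\times_X Y_2)$, both flat over $S$ (the second because $\overline{T}\to X$ and $X\to S$ are now flat) and with the same generic fibre $T\times_{X_{\eta}}T$ (as $T$ is an $H$-torsor), hence they coincide and $\overline{H}\times_S\overline{T}\to\overline{T}\times_X\overline{T}$ is an isomorphism. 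Therefore $(\overline{T},\overline{H},\overline{t})\in\mathcal{Q}f(X)$, which completes the proof.
\endproof
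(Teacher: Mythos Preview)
Your overall strategy---reduce to fibre products, pass to the generic fibre, take schematic closures inside the ambient flat torsor $(Y_1\times_X Y_2,\,G_1\times_S G_2)$, and then verify that the closure is again a torsor---is exactly what the paper indicates: it says the proof follows the pattern of Theorem~\ref{teoCofilDed} and refers to \cite{AEG}, \S 5.2 for the details. So at the level of architecture you are aligned with the paper.

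However, your argument has a genuine gap at the one place you yourself flag as ``the crux''. You assert that a ``purity argument over the normal scheme $X$ (resp.\ $X_s$)'' controls the failure of flatness, but you neither formulate the purity statement nor explain how normality of $X$ and of $X_s$ is actually used. As written, the sentence is also slightly confused: you speak of controlling the locus where things ``fail to be flat over $S$'', but flatness of $\overline{T}$ over $S$ is already secured by schematic closure; what you need is flatness of $\overline{T}\to X$, and no purity-type input has been supplied for that. In the finite case (Lemma~\ref{Lemma22}) one can argue via constancy of fibre rank over a reduced base, as you note; in the quasi-finite case that shortcut is unavailable, and the substitute argument---which is precisely what \cite{AEG}, \S 5.2 supplies and the paper calls ``a bit more complicated''---is missing from your write-up. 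A second, smaller point: you use that $(Y_1\times_{Y_0}Y_2)_s\to X_s$ is an $H'_s$-torsor by applying Theorem~\ref{teoNori2} over $k(s)$, but that theorem as stated requires $X_s$ reduced and connected, whereas here you are assuming $X_s$ normal and integral; that is fine, but it is worth saying explicitly that integral implies reduced and connected so the citation is legitimate.

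In short: right plan, but the step that distinguishes the quasi-finite case from the finite one is exactly the step you have left as a slogan. To make this a proof you must either reproduce the relevant argument from \cite{AEG}, \S 5.2 or give your own precise version of how normality of $X$ and of each $X_s$ forces $\overline{T}\to X$ to be faithfully flat.
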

\proof
The proof is quite long and though the strategy is similar to that of Theorem \ref{teoCofilDed}, the details are a bit more complicated and we refer the reader to  \cite{AEG}, \S 5.2.
\endproof

Again, not surprisingly, we have a fundamental group scheme $\pi^{\text{qf}}(X,x)$ of $X$ in $x$ and a universal $\pi^{\text{qf}}(X,x)$-torsor over $X$ dominating, by a unique morphism, every quasi-finite pointed torsor over $X$. The reason why $\pi^{\text{qf}}(X,x)$ can be useful in many situations is the following: every finite torsor is, in particular, quasi-finite, so there is a natural morphism 
$$\pi^{\text{qf}}(X,x)\to \pi(X,x)$$ which is a schematically dominant morphism (i.e. the dual morphism on the coordinates rings is injective) but in general not an isomorphism. So in some sense $\pi^{\text{qf}}(X,x)$ is \emph{bigger} than $\pi(X,x)$ and, for a fixed point $s\in S$, the morphism  
$$\pi(X_s,x_s)\to \pi^{\text{qf}}(X,x)_s$$ is more likely to be an isomorphism than 
$$\pi(X_s,x_s)\to \pi(X,x)_s $$  it can thus be easier for $\pi^{\text{qf}}(X,x)$, though not easy, to obtain information about it from its fibres.

\section{Any dimension}
\label{sez:ogni}
We have seen that when the base scheme $S$ has dimension 1 the whole picture becomes more complicated. But there is at least a very useful property that we used a lot: over a Dedekind domain a finitely generated module is flat if and only if it is torsion free. This is no longer true when $dim(S)\geq 2$ so to build a fundamental group scheme classifying all the (quasi-)finite torsors might be very hard. From now until the end of the paper, unless stated otherwise, $S$ will be a locally Noetherian regular scheme, thus in particular when $dim(S)=0$ (resp. $dim(S)=1$) then $S$ is the spectrum of a field (resp. a Dedekind scheme). What we are suggesting in the reminder of these short notes is a new approach to study the existence of the fundamental group scheme for such a base scheme $S$. As fqpc torsors can be difficult to study globally in this new setting we first introduce new torsors, for a new Grothendieck topology, whose description can be a bit cumbersome, but they will globally behave well. Usual fpqc torsors will be a particular case. Let us do it by steps. 

\subsection{The bumpy topology}
\label{sez:BumpyTop}
We are going to define a new Grothendieck topology (see for instance \cite[\S 2.3]{Vi} for a friendly introduction). Let $f:Y\to X$ 
be a faithfully flat morphism of schemes. We recall that $f$ is called a \emph{fpqc} morphism if every quasi-compact open subset of 
$X$ is the image of a quasi-compact open subset of $Y$. In \cite[Proposition 2.33]{Vi} 
it is possible to find many equivalent definitions for $f$ to be a fpqc morphism. We first need to introduce a new class of
morphisms which will play the role of coverings in our Grothendieck topology. 

\begin{definition}
%Let $S$ be a scheme. 
A $S$-morphism $f:Y\to X$ is called \emph{bumpy} if for every $s\in S$ 
the induced morphism $f_s:Y_s\to X_s$ is fpqc whenever $X_s$ is not empty.
\end{definition}

\begin{proposition}\label{propVIST}
%Let $S$ be a scheme
 Let $f:Y\to X$ be an  $S$-morphism. The following are true:
\begin{enumerate}[(i)]
 \item If $f$ is fpqc then it is bumpy.
  \item If there is an open covering $\{V_i\}$ of $X$ such that the restriction $f^{-1}(V_i)\to V_i$ is bumpy then $f$ is bumpy.
 \item The composite of bumpy morphisms is bumpy.
 \item Let $i:U\to X$ be any morphism of schemes. If $f$ is bumpy then $i^*(f):Y\times_XU\to U$ is bumpy.
 \item If $f:Y\to X$ is of finite presentation and for every $s\in S$ 
the induced morphism $f_s:Y_s\to X_s$ is faithfully flat then $f$ is bumpy.
\end{enumerate}

\end{proposition}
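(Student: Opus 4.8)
The plan is to deduce all five assertions from the standard stability properties of the class of fpqc morphisms, for which I refer to \cite[\S 2.3]{Vi} (and in particular to the equivalent characterisations in \cite[Proposition 2.33]{Vi}): fpqc morphisms are stable under composition and under arbitrary base change, being fpqc may be checked on an open covering of the target, and every faithfully flat quasi-compact morphism is fpqc. The unifying observation is that, since all the schemes and morphisms occurring in the statement are defined over $S$, passing to the fibre over a point $s\in S$, i.e.\ applying $(-)_s:=(-)\times_S\operatorname{Spec}k(s)$, commutes with fibre products and with composition; hence $(f\circ g)_s=f_s\circ g_s$, $(i^{*}f)_s=(i_s)^{*}(f_s)$ and $(f^{-1}(V_i))_s=(f_s)^{-1}((V_i)_s)$, and the condition ``bumpy'' is literally ``fpqc fibrewise over $S$''. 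So I would fix $s\in S$ with $X_s\neq\emptyset$ throughout and argue fibrewise.

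Concretely: for (i), $f_s$ is a base change of the fpqc morphism $f$, hence fpqc. For (ii), the $(V_i)_s$ form an open covering of $X_s$ and each $(f_s)^{-1}((V_i)_s)\to(V_i)_s$ is fpqc by hypothesis, so $f_s$ is fpqc since this property is local on the target. For (iii), $f_s$ being fpqc is in particular surjective, so $Y_s\neq\emptyset$ and therefore $g_s$ is fpqc; then $(f\circ g)_s=f_s\circ g_s$ is a composite of fpqc morphisms. For (iv), regard $U$ as an $S$-scheme via $U\xrightarrow{i}X\to S$; if $U_s\neq\emptyset$ then $X_s\neq\emptyset$, so $f_s$ is fpqc, and under the canonical identification $(Y\times_XU)_s\cong Y_s\times_{X_s}U_s$ the morphism $(i^{*}f)_s$ becomes $(i_s)^{*}(f_s)$, which is fpqc by base change. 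For (v), finite presentation is inherited by $f_s$, which is therefore quasi-compact; being also faithfully flat by hypothesis, $f_s$ is fpqc.

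I do not expect a genuine obstacle here; once the stability properties of fpqc morphisms are granted, the rest is bookkeeping. The points deserving a line of justification are the compatibility of the formation of fibres over $S$ with the fibre products entering (ii) and (iv) (a short associativity argument for fibre products), and the remark in (iii) that one must know $Y_s\neq\emptyset$ before the bumpiness of $g$ gives any information at $s$, which is why surjectivity of $f_s$ is used. For (v) the only subtlety is recalling that ``of finite presentation'' already entails quasi-compactness, and that this is exactly what turns fibrewise faithful flatness into fibrewise fpqc; in particular (v) does not require $f$ itself to be flat.
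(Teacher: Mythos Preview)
Your argument is correct and is precisely the approach the paper has in mind: the paper's own proof consists of the single line ``These are mainly easy consequences of the definition and \cite[Proposition 2.35]{Vi}'', and what you have written is exactly the fibrewise unpacking of that reference. The only difference is that you have spelled out the bookkeeping (the care in (iii) about $Y_s\neq\emptyset$, the associativity identifications in (ii) and (iv)), which the paper leaves implicit.
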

\begin{proof}
 These are mainly easy consequences of the definition and \cite[Proposition 2.35]{Vi}.
\end{proof}

\begin{definition}
\label{defBumpyTop} The \emph{bumpy} topology on the category $(Sch/S)$ is the topology in which the coverings $\{U_i\to U\}$ 
are collections of morphisms such that the induced morphism $\coprod_i U_i\to U$ is bumpy (here $\coprod_i U_i$ denotes the disjoint 
union).
\end{definition}

It is indeed easy to verify that this 
defines a Grothendieck topology (see \cite[Definition 2.24]{Vi}). This gives rise to the bumpy site $(Sch/S)^{bp}$. 
It will be useful to observe that a single bumpy morphism is thus a covering. 

\begin{remark}
Every affine $S$-group scheme $G\to S$ of finite type is bumpy. Indeed here $X=S$ and  for all $s\in S$ the restriction $f_s:G_s\to Spec(k(s))$ is fpqc.
\end{remark}

We need a new definition of torsor whose difference will be clear from the very beginning, that is even from the concept of trivial 
torsor.

\begin{definition}\label{defTorsTriv}
A trivial torsor consists of a $S$-scheme $Y$ with an action 
$\sigma:Y\times G\to Y$  together with an invariant arrow $f:Y\to X$, such that there is a $G$-equivariant $X$-morphism
$\varphi : X\times_S G\to Y$ with the property that for every $s\in S,$ $\varphi_s : X_s \times_{k(s)} G_s\to Y_s$
 is an isomorphism such that $f_s\circ \varphi_s=pr_1$ . 
\end{definition}

\begin{definition}
Let $S$ be a scheme and $G$ an affine $S$-group scheme. A bumpy $G$-torsor is a $S$-scheme $Y$ with an action 
$\sigma:Y\times G\to Y$ and a $G$-invariant $S$-morphism of schemes (i.e.  $f\circ \sigma = f \circ pr_1$ where 
$pr_1:Y\times G\to Y$ is the first projection), such that there exists a covering $\{U_i\to X\}$ in the bumpy site $(Sch/S)^{bp}$  
with the property that for each $i$ the arrow $pr_1:U_i\times_{X}Y\to U_i$ is a trivial torsor. 
\end{definition}

Every $G$-invariant morphism $Y\to X$ which is a torsor for the fpqc topology is also a torsor for the bumpy topology. Reciprocally a flat morphism  $Y\to X$ which is a bumpy torsor for a flat $S$-group scheme $G$ is nothing but a fpqc morphism (by the \emph{crit\`ere de platitude par fibres} the morphism $\varphi$ in Definition \ref{defTorsTriv} becomes an isomorphism) so in particular a fpqc torsor.
From now on we will simply say \emph{torsor} instead of \emph{bumpy torsor} unless we will need to stress the difference.  
Let us assume that for an affine $G$-invariant bumpy morphism $Y\to X$ the 
canonical morphism $Y\times_S G\to Y\times_X Y$ is a fibrewise isomorphism, that is for every $s\in S$ it induces an isomorphism $Y_s\times_{k(s)} G_s\to Y_s\times_{X_s} Y_s$. Since $f:Y\to X$ is itself a 
covering in the bumpy site $(Sch/S)^{bp}$ then this implies that $f$ is a $G$-torsor. More generally we have the following:

\begin{proposition}\label{propTorsCrit}
 Let $S$ be a scheme and $G$ an affine $S$-group scheme. Let $Y$ be a $S$-scheme with an action 
$\sigma:Y\times G\to Y$ and $f:Y\to X$ an affine bumpy $G$-invariant $S$-morphism. Then $f:Y\to X$ is a $G$-torsor if and only if 
for all $s\in S$ the canonical morphism $Y\times_SG\to Y\times_XY$ is a fibrewise isomorphism, that is the induced morphism $(Y\times_SG)_{s}\to (Y\times_XY)_{s}$ is an isomorphism.
\end{proposition}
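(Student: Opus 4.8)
The plan is to reduce the statement to a purely local, fibrewise descent argument, using the fact (established right before the statement) that $f\colon Y\to X$ is itself a covering in the bumpy site. First I would dispose of the trivial implication: if $f$ is a $G$-torsor, then by definition there is a bumpy covering $\{U_i\to X\}$ trivialising $Y$, and over each $U_i$ the canonical morphism $(U_i\times_X Y)\times_S G\to (U_i\times_X Y)\times_{U_i}(U_i\times_X Y)$ is an isomorphism, because on a trivial torsor $\varphi\colon U_i\times_S G\to U_i\times_X Y$ is a fibrewise isomorphism by Definition \ref{defTorsTriv} and the canonical map is obtained from it by base change; restricting to a point $s\in S$ and using that $U_{i,s}\to X_s$ together with the $U_{i,s}$ form an fpqc cover of $X_s$, the desired fibrewise isomorphism $(Y\times_S G)_s\to (Y\times_X Y)_s$ follows by fpqc descent of isomorphisms over $X_s$.

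For the substantive direction, assume the canonical morphism $Y\times_S G\to Y\times_X Y$ is a fibrewise isomorphism. The key observation is that $f\colon Y\to X$ is bumpy, hence a single-morphism covering in $(Sch/S)^{bp}$, so to show $f$ is a $G$-torsor it suffices to show that $pr_1\colon Y\times_X Y\to Y$ is a \emph{trivial} torsor in the sense of Definition \ref{defTorsTriv}. The natural candidate for the equivariant $X$-morphism is precisely the canonical arrow $\varphi\colon Y\times_S G\to Y\times_X Y$, $(y,g)\mapsto(y,\sigma(y,g))$; it is $G$-equivariant and satisfies $pr_1\circ\varphi=pr_1$ on the nose. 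The hypothesis says exactly that for every $s\in S$ the fibre $\varphi_s\colon (Y\times_X Y)\times_{X}\!\big|_s\, \cong Y_s\times_{k(s)}G_s\to Y_s\times_{X_s}Y_s$ is an isomorphism, which is the fibrewise-isomorphism condition required of a trivial torsor (with base $Y$ in place of $X$). Hence $pr_1\colon Y\times_X Y\to Y$ is a trivial torsor, the singleton $\{Y\to X\}$ is a bumpy covering trivialising $Y$, and $f$ is a bumpy $G$-torsor.

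A few points need care. One must check that the $G$-action on $Y\times_X Y$ used here is the one on the second factor, so that $\varphi$ is genuinely equivariant for it, and that $f$ being affine guarantees the fibre products involved are again affine over the relevant bases so that Definition \ref{defTorsTriv} applies verbatim. One also wants $X\times_S G$ in the definition to be replaced correctly by $Y\times_S G$ after the base change along $f\colon Y\to X$; this is immediate since $(Y\times_X Y)\to Y$ base-changed along $f$ is $(Y\times_S G)\to Y$ via $\varphi$. The main obstacle I anticipate is the forward direction's bookkeeping: one must verify that ``fibrewise isomorphism over all of $S$'' is precisely the condition in Definition \ref{defTorsTriv}, i.e. that no flatness over $S$ is secretly needed, and that the $G$-equivariance of $\varphi$ is with respect to the intended action. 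Once these are pinned down, the argument is formal, since it only invokes that a bumpy morphism is a covering and the definition of trivial torsor, with no descent machinery beyond what is needed for the easy direction.
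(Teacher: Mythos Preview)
Your proposal is correct and follows essentially the same approach as the paper: for the backward direction you use that $f\colon Y\to X$ is itself a bumpy covering and verify that the canonical map $Y\times_S G\to Y\times_X Y$ exhibits $pr_1$ as a trivial torsor (which is exactly the discussion the paper refers to just before the statement), and for the forward direction you pull back the trivialising bumpy cover to each fibre $X_s$ and use fpqc descent there, as the paper does. Your write-up is simply more explicit where the paper is terse.
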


\begin{proof}
On one direction it follows from previous discussion. On the other direction
let us assume that  $f:Y\to X$ is a $G$-torsor then there exists a covering $\{U_i\to X\}$ in the bumpy site 
 $(Sch/S)^{bp}$ such that for each $i$ the arrow $pr_1:U_i\times_{X}Y\to U_i$ is a trivial torsor, which implies that for every $s\in S,$  $U_{i,s}\times_{X_s}Y_s$ is isomorphic to  ${U_{i,s}}\times_{k(s)}G_s$. Now  $\{U_{i,s}\to X_s\}$ is a fpqc covering so the previous isomorphism says that the canonical induced morphism $Y_s\times_{k(s)}G_s\to Y_s\times_{X_s}Y_{s}$ is an isomorphism, which is enough to conclude. 
% which is a 
% bumpy morphism in the category $(Sch/S)$ (Proposition \ref{propVIST}, (iii)). Hence if we set $V:=\coprod_i U_i$ then 
% $Y_V:=V\times_{X}Y\to V$ is isomorphic to $G_{V}\to V$ which is a bumpy morphism in the category $(Sch/S)$. Now, 
% for every $s\in S$ we have the cartesian square
% $$\xymatrix{{(Y_V)}_s \ar[r]\ar[d] & Y_s \ar[d] \\ V_s \ar[r] & X_s  }$$
% and since ${(Y_V)}_s\to V_s$ and $V_s \to X_s$ are fpqc then $V_s \to X_s$ is fpqc too by faithfully flat descent. It is also clear 
%that $(Y\times_SG)_{V}\to (Y\times_XY)_{V}$ is an isomorphism. Reducing to $s$ for all $s\in S$ this proves, by faithfully flat 
%descent that $(Y\times_SG)_{s}\to (Y\times_XY)_{s}$ is an isomorphism.
\end{proof}

We can restate Proposition \ref{propTorsCrit} saying that
for an affine $S$-group scheme $G$,  a $S$-scheme $Y$ with an action 
$\sigma:Y\times G\to Y$ and an affine bumpy $G$-invariant $S$-morphism, $f:Y\to X$ is a $G$-torsor if and only if $f_s:Y_s\to X_s$ is a $G_s$-torsor for all $s\in S$.

From now on, and unless stated otherwise, by quasi-finite morphism we simply mean of finite type and \emph{with finite fibers}. An affine quasi-finite (non flat) group scheme over $S$ can have a fiber at a closed point of order higher then the generic fiber (or other closed points), whence the adjective \textit{bumpy} which stress the non-flatness of certain morphisms. Hereafter some examples 
of such group schemes (in order to have non-trivial examples, $S$ clearly cannot be the spectrum of a field).

\begin{example}\label{exMau}
Let $R$ be a discrete valuation ring with fraction and residue field respectively denoted by $K$ and $k$. We assume the latter to 
be of positive characteristic $p$. The letter $\pi$ will denote an uniformising element. We set $$G:=Spec \frac{R[x]}{x^p-x, \pi x}
\qquad \text{and} \qquad H:=Spec \frac{R[x]}{x^p, \pi x};$$
they are quasi finite $R$-group schemes of finite type when provided with comultiplication, counit and coinverse given respectively by $\Delta(x):=x\otimes 1+1\otimes x; \varepsilon(x):=0; S(x):=-x$. We immediately observe that neither $G$ nor $H$ are $S$-flat: indeed $x$ is, in both cases, a $R$-torsion element. Their generic fibers are trivial $K$-group schemes while
the special fibers $G_s$ and $H_s$ (of $G$ and $H$ respectively) are isomorphic to $(\mathbb{Z}/p\mathbb{Z})_k$ 
and $\alpha_{p,k}$ respectively. When $char(K)=p$ then $G$ can be easily recovered as the kernel of the morphism 
$(\mathbb{Z}/p\mathbb{Z})_R\to \mathcal{M}^1$ (sending $x\mapsto \pi x$) where $\mathcal{M}^n:= Spec \frac{R[x]}{x^p-\pi^{n(p-1)} x}$ 
is the finite and flat $R$-group scheme defined in \cite[\S 3.2]{Ma}, so in particular $G$ is finite and not just quasi-finite. In 
a similar way $H$ can be  recovered as the kernel of the morphism $\mathcal{M}^1\to \mathcal{M}^2$ (sending $x\mapsto \pi x$). 
\end{example}

\begin{remark}\label{remAccaIJ}
Let $R$ be a discrete valuation ring of positive equal characteristic $p$. Let $\mathcal{M}^n$ be as in Example \ref{exMau}.
We define $H_{ij}:=ker (\varphi:\mathcal{M}^i\to \mathcal{M}^j)$ (where $\varphi^{\#}:x\mapsto \pi^{j-i} x)$). We observe, 
for instance,   that there is a natural group scheme morphism $u:H_{12}\to H_{13}$ whose corresponding morphism on the 
coordinate rings is given by $u^{\#}:\frac{R[x]}{x^p, \pi^2 x}\to \frac{R[t]}{t^p, \pi t}$, $x\mapsto t$. It is immediate to observe 
that for all $s\in Spec(R)$ the restriction $u_s:H_{12,s}\to H_{13,s}$ is an isomorphism though $u$ is not an isomorphism: indeed 
$u^{\#}$ is not injective. This phenomenon cannot happen in the flat schemes world.
\end{remark}

\begin{remark}
Notations being as in Remark \ref{remAccaIJ} we observe that $H_{13}$ can be both seen as a trivial $H_{13}$-torsor and as a 
trivial $H_{12}$-torsor. This phenomenon is not entirily new as even in the easiest case where $X=S$ is the spectrum of a positive
characteristic field then a $\mu_p$-trivial torsor can also be seen as a $\alpha_p$-trivial torsor.
\end{remark}

%\begin{example}
%Let $\mathbb{Z}$ be the ring of integers and let $p,q$ be two prime numbers. We consider the $\mathbb{Z}$-scheme (quasi-finite and of 
%finite type, but not flat) $$N:=Spec \frac{\mathbb{Z}[x,y]}{x^2-x, px, y^2-y, qy.} $$ It becomes a $\mathbb{Z}$-group scheme 
%with comultiplication, counit and  coinverse given by $\Delta(x):=x\otimes 1+1\otimes x; \Delta(y):=y\otimes 1+1\otimes y; 
%\varepsilon(x):=0; \varepsilon(y):=0; S(x):=-x; S(y):=-y$. Here we notice that the generic fibre $N_{\eta}=Spec(\mathbb{Q})$ 
%is trivial, then 
%$N_p\simeq (\mathbb{Z}/2\mathbb{Z})_{\mathbb{F}_p}$, $N_q\simeq (\mathbb{Z}/2\mathbb{Z})_{\mathbb{F}_q}$ and for any prime $p_i$, 
%such that $p\neq p_i \neq q$ then $N_{p_i}\simeq (\mathbb{Z}/2\mathbb{Z})_{\mathbb{F}_{p_i}}\times_{Spec(\mathbb{F}_{p_i})} 
%(\mathbb{Z}/2\mathbb{Z})_{\mathbb{F}_{p_i}}$.
%\end{example}

These short examples show that it is easy and very natural to encounter non flat quasi-finite group schemes.

\subsection{The bumpy fundamental group scheme}

Let $S$ be any scheme and $X\to S$ a faithfully flat morphism of schemes of finite type. We assume the existence of a section $x\in X(S)$. Even in this 
new setting by pointed torsor we will mean a $G$-torsor $Y\to X$ endowed with a section $y\in Y_x(S)$. Let $Z\to X$ be a $H$-torsor
pointed in $z\in Z_x(S)$; a morphism of pointed torsors from $Y\to X$ to $Z\to X$ is the data of two morphisms $\alpha:G\to H$ and
$\beta: Y\to Z$ where $\alpha$ is a morphism of group schemes, $\beta(y)=z$ and  the following diagram commutes:
$$\xymatrix{Y\ar[r]^{\sigma_Y}\ar[d]_{\beta} & Y\times G \ar[d]^{\beta\times \alpha} \\ Z \ar[r]_{\sigma_Z} & Z\times H}$$
where ${\sigma_Y}$ and ${\sigma_Z}$ denote the actions of $G$ on $Y$ and $H$ on $Z$ respectively. Though in \S \ref{sez:BumpyTop} we gave a very general definition for \textit{non flat} torsors we do not need to work in that generality: our purpose is indeed to work in the smallest possible cofiltered full subcategory of the category of bumpy pointed torsors containing fpqc torsors; since we do not know, at this stage, whether the category of (quasi-)finite fpqc torsors itself is cofiltered we suggest a different candidate and we will prove that it is a cofiltered category. This is why from now on we only consider the following situation:
\begin{definition}\label{definTORSQF}
Let $G\to S$ be a quasi finite, affine group scheme of finite type and $Y\to X$ a bumpy $G$-torsor. Such a quasi-finite torsor will be often denoted by $(Y,G,y)$.   We will denote by $\mathcal{Q}f(X,x)$ the category of all those quasi-finite pointed torsors which also satisfy the following property: the canonical fibrewise isomorphism $Y\times_S G\to Y\times_X Y$ is (globally) an isomorphism. The full subcategory of finite pointed torsors (i.e. $G\to S$ is finite) will be denoted by $\mathcal{F}(X,x)$.
\end{definition}

The following theorem holds even without the \textit{stronger} assumption on torsors made in \ref{definTORSQF}, but, again, we stress that not only we do not need that generality but we also prefer to restrict as much as possible our category to get closer to the category of fpqc torsors.

\begin{theorem}\label{teoCOFILTAGAIN} Let $X$ be a scheme and $X\to S$ a faithfully flat morphism of finite type with the property that
for all $s\in S$ the fiber $X_s$ is reduced and connected. Let $x\in X(S)$ be a section.  Then the categories $\mathcal{Q}f(X,x)$  and $\mathcal{F}(X,x)$ are cofiltered.  
\end{theorem}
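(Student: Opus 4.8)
The plan is to follow the now-familiar strategy used in Theorems \ref{teoNori2} and \ref{teoCofilDed}, but carried out fibrewise: we verify that $\mathcal{Q}f(X,x)$ (and then its full subcategory $\mathcal{F}(X,x)$) has a final object and admits fibre products, so that the footnote criterion for being cofiltered applies. The final object is of course the trivial torsor $X = X\times_S S$ under the trivial $S$-group scheme, pointed by $x$ itself, and every pointed torsor maps uniquely to it. The real content is the construction of fibre products. So I would take three objects $(Y_i,G_i,y_i)$, $i=0,1,2$, of $\mathcal{Q}f(X,x)$ together with morphisms $(Y_1,G_1,y_1)\to(Y_0,G_0,y_0)$ and $(Y_2,G_2,y_2)\to(Y_0,G_0,y_0)$, and I would propose as candidate the naive fibre product
$$(Y_3,G_3,y_3) := (Y_1\times_{Y_0}Y_2,\ G_1\times_{G_0}G_2,\ y_1\times_{y_0}y_2).$$
The crucial observation — and the reason the bumpy setting is the right one — is that in Definition \ref{definTORSQF} we never required $G_3$ to be flat over $S$; it is automatically affine, of finite type, and quasi-finite (a closed subscheme of $G_1\times_S G_2$ with finite fibres), hence a legitimate structure group. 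So, unlike in the Dedekind case, there is no need to replace it by a Zariski closure.

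First I would record the easy structural facts: $G_3 = G_1\times_{G_0}G_2$ is an affine $S$-group scheme of finite type with $(G_3)_s = (G_1)_s\times_{(G_0)_s}(G_2)_s$, hence quasi-finite; $Y_3\to X$ is affine (being built from affine morphisms) and carries an obvious $G_3$-action with a marked point $y_3\in (Y_3)_x(S)$; and $Y_3\to X$ is $G_3$-invariant. By Proposition \ref{propTorsCrit} it then suffices to check two things fibrewise, namely that for every $s\in S$ the morphism $(Y_3)_s\to X_s$ is bumpy and that the canonical map $(Y_3)_s\times_{k(s)}(G_3)_s\to (Y_3)_s\times_{X_s}(Y_3)_s$ is an isomorphism — equivalently, by the restatement following Proposition \ref{propTorsCrit}, that $(Y_3)_s\to X_s$ is a $(G_3)_s$-torsor for the fpqc topology on $X_s$. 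Now for fixed $s$ the data $(Y_{i,s},G_{i,s},y_{i,s})$ are honest pointed fpqc $G_{i,s}$-torsors over $X_s$ (again by the fibrewise criterion applied to the $Y_i$), and $(Y_3)_s = Y_{1,s}\times_{Y_{0,s}}Y_{2,s}$, $(G_3)_s = G_{1,s}\times_{G_{0,s}}G_{2,s}$ are exactly the naive fibre products of these. Since $X_s$ is reduced and connected by hypothesis, Nori's argument in the field case — i.e. the proof of Theorem \ref{teoNori2}, whose only nontrivial input is precisely showing $Y_{1,s}\times_{Y_{0,s}}Y_{2,s}$ is faithfully flat over $X_s$ — shows that $(Y_3)_s\to X_s$ is a $(G_3)_s$-torsor. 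This is the step I expect to be the main obstacle: one must quote (or re-run) the reducedness-and-connectedness argument uniformly in $s$ and make sure the resulting fibrewise torsor structures glue to the claimed global structure via Proposition \ref{propTorsCrit}; the non-flatness of $G_3$ over $S$ plays no adversarial role precisely because we have built the bumpy framework to tolerate it.

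Finally I would verify the second cofilteredness axiom (coequalizing a pair of parallel morphisms $c_1,c_2:(Y_1,G_1,y_1)\rightrightarrows(Y_2,G_2,y_2)$), which in this pointed-torsor setting is formal: the equalizer can be obtained as a suitable closed subobject, or one simply notes that two morphisms of pointed torsors with the same source that agree on the marked point and are compatible with the structure-group maps must already coincide on fibres and hence globally by affineness and the fibrewise criterion — so axiom (2) holds trivially with $U = A$ and $u = \mathrm{id}$ once one checks $c_1 = c_2$ outright, a phenomenon typical of pointed torsor categories. The statement for $\mathcal{F}(X,x)$ follows because if $G_1,G_2$ are finite over $S$ then $G_3\subset G_1\times_S G_2$ is finite as well, so $\mathcal{F}(X,x)$ is closed under the fibre-product construction inside $\mathcal{Q}f(X,x)$ and inherits the final object, hence is itself cofiltered.
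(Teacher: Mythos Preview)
Your overall strategy coincides with the paper's: take the naive fibre product $(Y_3,G_3,y_3)=(Y_1\times_{Y_0}Y_2,\,G_1\times_{G_0}G_2,\,y_1\times_{y_0}y_2)$ and verify it belongs to $\mathcal{Q}f(X,x)$, invoking Nori's field-case argument on each fibre $X_s$ and then Proposition~\ref{propTorsCrit} to conclude that $Y_3\to X$ is a bumpy $G_3$-torsor. That part is fine and is exactly what the paper does.

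There is, however, a genuine gap. You stop once you have checked the fibrewise isomorphisms $(Y_3)_s\times_{k(s)}(G_3)_s\simeq (Y_3)_s\times_{X_s}(Y_3)_s$; but membership in $\mathcal{Q}f(X,x)$, per Definition~\ref{definTORSQF}, requires more: the canonical map $Y_3\times_S G_3\to Y_3\times_X Y_3$ must be a \emph{global} isomorphism, not merely a fibrewise one. In the bumpy world a fibrewise isomorphism between non-flat $S$-schemes need not be an isomorphism (see Remark~\ref{remAccaIJ}), so this extra condition is not automatic and your Proposition~\ref{propTorsCrit} appeal does not supply it. The paper addresses this explicitly: since $Y_3\to X$ factors through each $Y_i$ ($i=0,1,2$), one has
\[
Y_3\times_X Y_i \;\simeq\; Y_3\times_{Y_i}(Y_i\times_X Y_i)\;\simeq\; Y_3\times_{Y_i}(Y_i\times_S G_i)\;\simeq\; Y_3\times_S G_i,
\]
where the middle isomorphism uses precisely the global-isomorphism hypothesis that holds because $Y_i\in\mathcal{Q}f(X,x)$. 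Pulling the square $Y_3,Y_1,Y_2,Y_0$ back along $Y_3\to X$ then gives
\[
Y_3\times_X Y_3=(Y_3\times_X Y_1)\times_{(Y_3\times_X Y_0)}(Y_3\times_X Y_2)\simeq Y_3\times_S(G_1\times_{G_0}G_2)=Y_3\times_S G_3,
\]
which is the missing global isomorphism. Without this step your candidate might only be a bumpy torsor in the broad sense of \S\ref{sez:BumpyTop}, not an object of the restricted category $\mathcal{Q}f(X,x)$ whose cofilteredness is being asserted. (Your separate treatment of axiom~(2) is harmless but unnecessary once fibre products and a final object are in hand, as you yourself note at the outset.)
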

 \begin{proof}
  The proof for the two categories being essentially the same here we only consider  $\mathcal{Q}f(X,x)$. Since $\mathcal{Q}f(X,x)$ has a final object, it is sufficient to prove that given three objects 
  $(Y_i, G_i, y_i), i=0, 1 , 2$ of $\mathcal{Q}(X,x)$ and two morphisms 
$\varphi_i:(Y_i, G_i, y_i)\to (Y_0, G_0, y_0), i=1, 2$, there exists a forth object $(Y_3, G_3, y_3)$ and two morphisms 
$\psi_i:(Y_3, G_3, y_3)\to (Y_i, G_i, y_i), i=1, 2$ that are closing the diagram. We simply take the fiber product $(Y_3, G_3, y_3)
:= (Y_2\times_{Y_0}Y_1,G_2\times_{G_0}G_1, y_2\times_{y_0}y_1).$ We need to prove that it belongs to $\mathcal{Q}f(X,x)$. The assumption that 
for all $s\in S$ the fiber $X_s$ is reduced and connected is required to ensure that the category $\mathcal{Q}(X_s,x_s)$ 
of finite pointed (fpqc) torsors is cofiltered. For this reason the fiber product $Y_3$ is bumpy over $X$. 
It is now immediate to observe that  $Y_3\to X$ is a bumpy $G_3$-torsor following Proposition \ref{propTorsCrit}. This is however  not sufficient to conclude as we still need to prove that the fiberwise isomorphism $Y_3\times_S G_3\to Y_3\times_X Y_3$ is actually an isomorphism:  it is enough to pull back over $Y_3$ the square given by $Y_3, Y_2, Y_1, Y_0$ and the conclusion follows from the fact that $Y_2, Y_1, Y_0$ belong to $\mathcal{Q}f(X,x)$.
 \end{proof}

 \begin{corollary} Let $X\to S$ be a surjective morphism of finite type with the property that for all $s\in S$ the fiber $X_s$ is reduced and connected. Then there exist a $S$-group scheme $\pi^{Bqf}(X,x)$ and a  (pointed in $x^{Bqf}$ over $x$)  $\pi^{Bqf}(X,x)$-torsor $X^{Bqf}\to X$ universal in the following sense: for every object $(Y,G,y)$ of $\mathcal{Q}f(X,x)$ there is a unique $X$-morphism of (pointed) torsors $X^{Bqf}\to Y $.
 
\end{corollary}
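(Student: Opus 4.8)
The plan is to pass to the pro-object attached to the cofiltered category $\mathcal{Q}f(X,x)$, exactly as Nori does over a field, the only twist being that every compatibility must be checked \emph{fibre by fibre} over $S$ rather than over $X$, since bumpy torsors need not be flat over $X$. First I would record that each object $(Y,G,y)$ of $\mathcal{Q}f(X,x)$ has an affine structure morphism $Y\to X$ (bumpy-locally on $X$ it is a trivial torsor $U_i\times_SG$, and $G$ is affine over $S$), and likewise every transition morphism in the diagram is affine; since $S$ is locally Noetherian and $X$ is of finite type over $S$, each $Y$ is in fact of finite presentation over $X$. After replacing $\mathcal{Q}f(X,x)$ by an equivalent small category (the isomorphism classes of such torsors form a set), Theorem~\ref{teoCOFILTAGAIN} makes this a genuine small cofiltered diagram of affine $X$-schemes, so the limit exists in affine $X$-schemes; set
\[
X^{Bqf}:=\varprojlim_{(Y,G,y)\in\mathcal{Q}f(X,x)}Y,\qquad
\pi^{Bqf}(X,x):=\varprojlim_{(Y,G,y)\in\mathcal{Q}f(X,x)}G,
\]
the latter being an affine $S$-group scheme because a cofiltered limit of affine $S$-group schemes carries a natural group structure. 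Limiting the actions $\sigma_Y$ (they form a morphism of inverse systems since morphisms of pointed torsors commute with actions, and $\varprojlim(Y\times_SG)=X^{Bqf}\times_S\pi^{Bqf}(X,x)$) gives an action of $\pi^{Bqf}(X,x)$ on $X^{Bqf}$; limiting the structure maps gives a $\pi^{Bqf}(X,x)$-invariant $S$-morphism $f:X^{Bqf}\to X$; and limiting the marked points $y\in Y_x(S)$ gives a section $x^{Bqf}\in X^{Bqf}_x(S)$.

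Next I would check that $f:X^{Bqf}\to X$ is a bumpy $\pi^{Bqf}(X,x)$-torsor via the fibrewise criterion of Proposition~\ref{propTorsCrit}. It is affine (a filtered colimit of quasi-coherent algebras is quasi-coherent) and $\pi^{Bqf}(X,x)$-invariant by construction. For each $s\in S$ one has $X^{Bqf}_s=\varprojlim_i Y_{i,s}$, because on affine schemes the fibre at $s$ is a tensor product and hence commutes with the filtered colimit of coordinate rings; each $Y_{i,s}\to X_s$ is fpqc, and a cofiltered limit of affine faithfully flat morphisms is faithfully flat (a filtered colimit of flat algebras is flat, and surjectivity on spectra survives a cofiltered limit of quasi-compact surjections by the spectral-space compactness argument), hence fpqc, so $f$ is bumpy. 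Finally, both $(-)_s$ and fibre products commute with the limit, so the canonical morphism $X^{Bqf}\times_S\pi^{Bqf}(X,x)\to X^{Bqf}\times_XX^{Bqf}$ is fibrewise the inverse limit of the isomorphisms $Y_{i,s}\times_{k(s)}G_{i,s}\to Y_{i,s}\times_{X_s}Y_{i,s}$, hence a fibrewise isomorphism, and Proposition~\ref{propTorsCrit} shows $f$ is a $\pi^{Bqf}(X,x)$-torsor. (In fact that canonical morphism is a global isomorphism, since each $(Y_i,G_i,y_i)$ satisfies the stronger condition of Definition~\ref{definTORSQF}.)

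It remains to establish the universal property. Because each $Y$ and each $G$ in the diagram is of finite presentation over $X$, resp.\ over $S$, and the transition morphisms are affine, the standard limit formula for morphisms of schemes gives $\mathrm{Hom}_X(X^{Bqf},Y)=\varinjlim_i\mathrm{Hom}_X(Y_i,Y)$ and a similar formula in the category of $S$-group schemes; the classes of $\mathrm{id}_{(Y,G,y)}$ produce the canonical projection $(X^{Bqf},\pi^{Bqf}(X,x),x^{Bqf})\to(Y,G,y)$, which is a morphism of pointed torsors. For uniqueness, given two such morphisms $\phi,\psi$, choose a single index $i$ through which all four data (the two scheme maps and the two group maps) factor, say $\phi=\phi_i\circ p_{Y_i}$, $\psi=\psi_i\circ p_{Y_i}$; here $\phi_i,\psi_i:(Y_i,G_i,y_i)\to(Y,G,y)$ are again morphisms of pointed torsors, the torsor-morphism property descending along the fibrewise faithfully flat projection $p_{Y_i}$ and the marked points matching because $p_{Y_i}(x^{Bqf})=y_i$. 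Axiom~(2) of cofilteredness gives an object $(U,H,u)$ of the diagram and $v:(U,H,u)\to(Y_i,G_i,y_i)$ with $\phi_i\circ v=\psi_i\circ v$; since $p_{Y_i}=v\circ p_U$, we get $\phi=\phi_i\circ v\circ p_U=\psi_i\circ v\circ p_U=\psi$. The main obstacle in the whole argument is the torsor check of the second paragraph — concretely, showing that bumpiness survives the cofiltered limit — because this is precisely where the possible non-flatness over $X$ forces one to argue separately on every fibre $X_s$ instead of invoking flat descent over $X$; the rest is the routine pro-object formalism already present in Nori's construction over a field.
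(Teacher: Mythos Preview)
Your approach is the same as the paper's---define $(X^{Bqf},\pi^{Bqf}(X,x),x^{Bqf})$ as the projective limit of the cofiltered diagram $\mathcal{Q}f(X,x)$, which exists because the transition morphisms are affine---but you supply considerably more detail than the paper does. The paper's entire proof is two lines: it writes down the limit and asserts existence ``as the morphisms are affine'', without checking that the result is a bumpy torsor or spelling out the universal property. Your verification that the limit is fibrewise fpqc over $X$ (via filtered colimits of flat algebras and the spectral-space compactness argument for surjectivity), together with your use of Proposition~\ref{propTorsCrit} and the global-isomorphism condition of Definition~\ref{definTORSQF}, is a genuine addition, as is the explicit uniqueness argument via axiom~(2) of cofilteredness.

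One small caveat: your justification that each $Y\to X$ is affine (``bumpy-locally on $X$ it is a trivial torsor $U_i\times_SG$, and $G$ is affine over $S$'') is not quite airtight. Affineness is not known to be local for the bumpy topology, and a trivial torsor in the sense of Definition~\ref{defTorsTriv} is only \emph{fibrewise} isomorphic to $X\times_SG$, not globally. The paper sidesteps this by simply asserting that ``the morphisms are affine'', so this is a gap you inherit from the source rather than one you introduce; but if you want a clean argument you should either add affineness of $Y\to X$ as part of the data in Definition~\ref{definTORSQF} (which is consistent with how Proposition~\ref{propTorsCrit} is stated), or give a direct reason in the quasi-finite case.
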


\begin{proof}Very naturally from Theorem \ref{teoCOFILTAGAIN} we have, writing shortly, 
$$( X^{Bqf},\pi^{Bqf}(X,x), x^{Bqf}):= \varprojlim_i (Y_i, G_i, y_i)$$ the 
limit running through all the objects of $\mathcal{Q}f(X,x)$, which exists as the morphisms are affine. 
\end{proof}

 \begin{definition}
  We call $\pi^{Bqf}(X,x)$ the bumpy fundamental group scheme. In a similar way we obtain $$( X^{B},\pi^{B}(X,x), x^{B}):= \varprojlim_i (Y_i, G_i, y_i)$$
the limit running through all the objects of $\mathcal{F}(X,x)$. This triple is universal in the obvious sense already mentioned. The $S$-affine group scheme $\pi^{B}(X,x)$ will be called the finite bumpy fundamental group scheme.
 \end{definition}
 
It is worth repeating that it is not known whether the fpqc pointed finite (resp. quasi-finite) torsors form a cofiltered category for such a general base scheme $S$, but \emph{all} of them are already inside $\mathcal{F}(X,x)$ (resp. $\mathcal{Q}f(X,x)$) and so provided the fundamental group scheme $\pi(X,x)$ classifying all the finite pointed fpqc torsors (resp. the quasi-finite fundamental group scheme $\pi^{\text{qf}}(X,x)$  classifying all the quasi-finite pointed fpqc torsors) exists, there would be a natural morphism $\pi^{B}(X,x)\to \pi(X,x)$ (resp. $\pi^{Bqf}(X,x)\to \pi^{\text{qf}}(X,x)$).

\section{Over a Dedekind scheme (revisited)}
\label{sez:revisited}
This section is meant to show that our construction is not esoteric: indeed the bumpy fundamental group scheme of a scheme $X$ coincides with the already existing ones, whenever comparable. It is immediate from the definition of bumpy torsor that the bumpy fundamental group scheme of a scheme $X$ defined over a field is nothing but the fundamental group scheme of Nori, described in these notes in \S \ref{sez:campo}. Here we show that if the base scheme $S$ is a Dedekind scheme then, when comparable, the bumpy fundamental group scheme of $X$ coincides with the ``usual'' fundamental group scheme described in \S \ref{sez:Dede}. The reason is that in this
case we can prove (and we actually will in few lines) that every quasi-finite bumpy torsor is preceded by a quasi-finite fpqc torsor. And, also,
every finite bumpy torsor is preceded by a finite fpqc torsor. 
 
\begin{theorem}\label{teoLAST}
Let $S$ be a Dedekind scheme, $X$ a faithfully flat $S$-scheme of finite type and $x\in X(S)$ a section. Let us moreover assume that one of the following assumptions is satisfied:

\begin{enumerate}
\item for every $s\in S$ the fibre $X_s$ is reduced; 
\item $X$ is integral and normal and that for each $s\in S$ the fibre $X_s$ is normal and integral
\end{enumerate}
then if (1) is satisfied the natural morphism $\pi^{B}(X,x)\to \pi(X,x) $ is an isomorphism and if moreover (2) is satisfied the natural morphism $\pi^{Bqf}(X,x)\to \pi^{\text{qf}}(X,x) $ is an isomorphism too.
\end{theorem}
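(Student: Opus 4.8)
The plan is to reduce the comparison of the two fundamental group schemes to a statement about individual torsors, using the universal properties. Since both $\pi^{B}(X,x)$ and $\pi(X,x)$ are inverse limits over cofiltered categories of pointed torsors, and the natural morphism comes from the inclusion $\mathcal{F}^{\mathrm{fpqc}}(X,x)\hookrightarrow \mathcal{F}(X,x)$ of the (classical) finite fpqc torsors into the finite bumpy torsors, it suffices to show that this inclusion is \emph{cofinal}: that is, every object $(Y,G,y)$ of $\mathcal{F}(X,x)$ is dominated by some finite \emph{fpqc} pointed torsor $(Y',G',y')\to (Y,G,y)$. Indeed, cofinality of a subcategory of a cofiltered category induces an isomorphism on the inverse limits, hence an isomorphism $\pi^{B}(X,x)\xrightarrow{\sim}\pi(X,x)$; the same argument with $\mathcal{Q}f$ in place of $\mathcal{F}$ handles the quasi-finite case under hypothesis (2).

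So the heart of the matter is: given a finite (resp. quasi-finite) \emph{bumpy} pointed $G$-torsor $Y\to X$ over a Dedekind base $S$, produce a finite (resp. quasi-finite) \emph{flat} $S$-group scheme $G'$, an fpqc $G'$-torsor $Y'\to X$, and a morphism of pointed torsors $(Y',G',y')\to (Y,G,y)$. The natural candidate is to take Zariski closures: let $G'$ be the schematic closure of the generic fibre $G_\eta$ inside $G$ (or better, inside some ambient flat group scheme; recall from Example \ref{exMau} that the non-flat $G$ of that example is literally a kernel, and sits inside a finite flat group scheme, with $G$ and $G'$ having the \emph{same} generic fibre and the closure $G'$ flat). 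Over a Dedekind scheme, schematic closure of the generic fibre is automatically flat because a finitely generated torsion-free module over a Dedekind domain is flat — this is precisely the property singled out at the start of \S \ref{sez:ogni}. One then shows $G'$ is a finite (resp. quasi-finite) flat $S$-group scheme with $G'_\eta \cong G_\eta$, and there is a faithfully flat homomorphism, or at least a schematically dominant homomorphism, relating $G'$ and $G$; similarly take $Y'$ to be the schematic closure in $Y$ of the generic fibre $Y_\eta$, which is a torsor under $G'$ by Lemma \ref{Lemma22} (this is exactly the situation of that lemma: the generic fibre is an honest fpqc torsor over $X_\eta$, and we take Zariski closures to descend it to $X$), and it is flat over $X$, hence an fpqc torsor. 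The marked point $y\in Y_x(S)$ restricts to a generic marked point and its closure lands in $Y'$, giving $y'$.

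I would carry this out in the following order. First, record the key flatness input: over a Dedekind scheme, the schematic image of a scheme flat over the generic point is flat. Second, given a bumpy torsor $(Y,G,y)$ in $\mathcal{F}(X,x)$ (resp. $\mathcal{Q}f(X,x)$), pass to generic fibres to get a finite (resp. quasi-finite) fpqc torsor $(Y_\eta,G_\eta,y_\eta)$ over $X_\eta$ — here I use that the generic fibre of a bumpy torsor is fpqc by definition, and finite/quasi-finite is inherited. Third, define $G'$ and $Y'$ as the schematic closures of the generic fibres inside $G$ and $Y$ respectively, check they are group scheme / $X$-scheme over $S$, and invoke Lemma \ref{Lemma22} (whose hypotheses (1) or (2) match ours) to conclude $Y'\to X$ is a flat, finite (resp. quasi-finite) fpqc $G'$-torsor in the classical sense. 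Fourth, observe that the inclusions $Y'\hookrightarrow Y$ and $G'\hookrightarrow G$ form a morphism of pointed torsors and that $y$ closes up to $y'$; this exhibits the cofinality. Fifth, conclude by the abstract nonsense that cofinal inclusions of cofiltered index categories induce isomorphisms on the pro-objects, hence on the group schemes. Finally, note that the finite case needs only hypothesis (1) (as Theorem \ref{teoCofilDed} and Lemma \ref{Lemma22} only need (1)), while the quasi-finite case invokes the normality hypothesis (2) exactly where Theorem \ref{teoCofilDed2} does.

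The main obstacle I anticipate is Step 3: verifying that the schematic closures $G'$ and $Y'$ really are compatible — i.e. that the $G$-action on $Y$ restricts to a $G'$-action on $Y'$ making $Y'$ a $G'$-torsor and not merely a flat $X$-scheme — and that the morphism $G'\to G$ is a closed immersion of group schemes so that $(Y',G',y')\to (Y,G,y)$ is a bona fide morphism in the category. This is essentially the content of Lemma \ref{Lemma22}, so if I may invoke it the difficulty dissolves, but one must be careful that the ambient torsor in that lemma is a bumpy one rather than an fpqc one; a small additional argument (using Proposition \ref{propTorsCrit}, i.e. checking fibrewise) may be needed to see that taking closures inside a \emph{bumpy} $G$-torsor over a Dedekind base still produces a classical fpqc torsor, since on the generic fibre bumpy equals fpqc and over a Dedekind scheme flatness of the closure is free. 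The other, more bookkeeping-type, obstacle is making precise the claim that $G$ itself is ``preceded by'' the flat $G'$: one should check that the generic fibres agree and that this is all that is needed, because a morphism of pointed torsors over $X$ is determined by what it does generically when $X$ is $S$-flat and the targets are separated.
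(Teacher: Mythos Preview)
Your overall architecture matches the paper exactly: reduce to cofinality, and for a given bumpy torsor $(Y,G,y)$ take the schematic closures $\overline{G_\eta}\subset G$ and $\overline{Y_\eta}\subset Y$ of the generic fibres, which are flat over the Dedekind base. The paper also treats only the DVR subcase of (1) in detail and defers the rest to \cite{AEG}.

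Where you diverge is at the obstacle you yourself flag. You propose to invoke Lemma~\ref{Lemma22}, noting that its ambient torsor is fpqc rather than bumpy and hoping a ``small additional argument'' suffices. The paper does \emph{not} try to adapt Lemma~\ref{Lemma22}; instead it runs a direct argument that is more substantial than ``small''. First, it uses the extra hypothesis built into Definition~\ref{definTORSQF} --- that the canonical map $Y\times_S G\to Y\times_X Y$ is a \emph{global} isomorphism, not merely fibrewise --- to conclude that the induced map $u:\overline{Y_\eta}\times_S\overline{G_\eta}\to \overline{Y_\eta}\times_X\overline{Y_\eta}$ is a closed immersion (since it sits inside the isomorphism $v$). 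You never invoke this assumption, and without it there is no obvious reason the closures interact well. Second, rather than checking directly that $\overline{Y_\eta}$ is a torsor, the paper forms the fppf quotient $\overline{Y_\eta}/\overline{G_\eta}$ (which exists by \cite{SGA3} since $\overline{G_\eta}$ is now flat), and then argues at length that the comparison map $\lambda:\overline{Y_\eta}/\overline{G_\eta}\to X$ is separated, proper, quasi-finite hence finite, surjective, and finally an isomorphism by a fibrewise argument using reducedness of $X_s$ and \cite{WWO}, Lemma~1.3.

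So your plan is sound, but the step you label as the main obstacle is genuinely the whole proof, and the paper's resolution is a quotient-and-comparison argument rather than a tweak of Lemma~\ref{Lemma22}. In particular, make sure you use the global-isomorphism clause of Definition~\ref{definTORSQF}: it is precisely what bridges the gap between ``bumpy ambient'' and the closed-immersion input that makes the closure argument run.
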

\proof The proof is similar to (and can be deduced from) \cite{AEG}, Proposition 4.2, Proposition 5.2, Proposition 5.5 and we leave to the reader all the details. Here we only prove that $\pi^{B}(X,x)\to \pi(X,x) $ is an isomorphism when $S=Spec(R)$ where $R$ is a discrete valuation ring (this is thus a subcase of case (1)): it is sufficient to prove that any object of $\mathcal{F}(X,x)$ is preceded by a finite fpqc torsor, that means that if $Y\to X$ is a finite bumpy $G$-torsor then there exist a finite and flat $R$-group scheme $H$ and a finite fpqc $H$-torsor $Z\to X$ and a morphism (of pointed torsors) $Z\to Y$. So let $Y_{\eta}$ be, as usual, the generic fibre of $Y$: it is a finite fpqc $G_{\eta}$-torsor over  $X_{\eta}$. We claim that the Zariski closure $\overline{Y_{\eta}}$ of $Y_{\eta}$ in $Y$ is a fpqc-torsor over $X$ under the action of the $R$-finite and flat group scheme obtained as the Zariski closure $\overline{G_{\eta}}$ of $G_{\eta}$ in $G$. Following \cite{EGAIV-2} (2.8.3) we deduce an action  $\overline{Y_{\eta}}\times_S\overline{G_{\eta}}\to \overline{Y_{\eta}}$ 
compatible with the action of $G$ on $Y$. Thus, in particular, the canonical morphism 
$$u: \overline{Y_{\eta}}\times_S\overline{G_{\eta}}\to \overline{Y_{\eta}}\times_X \overline{Y_{\eta}}$$ complete the following diagram:

$$\xymatrix{\overline{Y_{\eta}}\times_S\overline{G_{\eta}}\ar[r]^u\ar@{^{(}->}[d]_i & \overline{Y_{\eta}}\times_X \overline{Y_{\eta}} 
\ar@{^{(}->}[d]^j\\ Y\times_S G \ar[r]^v & Y\times_X Y }$$

where $v$ is an isomorphism, $i$ and $j$ are closed immersion so $u$ is a closed immersion too.

The huge difference between $G$ and $\overline{G_{\eta}}$ is that $G$ may not be $R$-flat, while $G$ is certainly $R$-flat. Thus according to \cite{SGA3} Expos\'e V, Théorème 7.1, the quotient  $\overline{Y_{\eta}}/\overline{G_{\eta}}$ exists and it is represented by a scheme such that  $p:\overline{Y_{\eta}} \to \overline{Y_{\eta}} /\overline{G_{\eta}}$ is faithfully flat and the morphism $\overline{Y} \to X$ factors through  $\lambda:\overline{Y_{\eta}}/\overline{G_{\eta}}\to X$ that we study below. First we observe that $\lambda$ is separated: since  $p:\overline{Y_{\eta}} \to \overline{Y_{\eta}} /\overline{G_{\eta}}$ is proper we just need to consider the following commutative diagram:
$$\xymatrix{\overline{Y_{\eta}}\ar@{^{(}->}[r]^{\Delta_2}\ar@{->>}[d]_{p} & \overline{Y_{\eta}}\times_X \overline{Y_{\eta}} \ar@{->>}
[d]^{p\times p}\\ \overline{Y_{\eta}}/\overline{G_{\eta}} \ar[r]^{\Delta_1} & \overline{Y_{\eta}}/\overline{G_{\eta}}\times_X \overline{Y_{\eta}}/\overline{G_{\eta}} }$$
where $\Delta_2$ is a closed immersion as $\overline{Y_{\eta}}\to X$ is propre (by assumption, cf. Definition \ref{definTORSQF}, $\overline{Y}\to X$ is propre). This diagram implies  $$\Delta_1(\overline{Y_{\eta}}/\overline{G_{\eta}})=\Delta_1(p(\overline{Y_{\eta}}))=(p\times p) (\Delta_2(\overline{Y_{\eta}})) $$ 
which is closed in $\overline{Y_{\eta}}/\overline{G_{\eta}}\times_X \overline{Y_{\eta}}/\overline{G_{\eta}}$ because $(p\times p ) \Delta_2$ is propre. So finally $\lambda$ is separated (\cite{H77}, II, Corollary 4.2). But $p$ is  surjective,  $\lambda \circ p$ is proper (it is actually the composition of  $Y\to X$ and the closed immersion $\overline{Y_{\eta}} \hookrightarrow Y$); hence $\lambda$ is  propre too (\cite{Liu}, Ch 3, Prop 3.16 (f)). Furthermore we observe that for every $x\in X$ the set
$\lambda^{-1}(x)$ is finite so $\lambda$, which is proper and quasi-finite, is in particular finite (\cite{EGAIV-3} Th\'eor\`eme 8.11.1), hence affine. We did not finish yet, as we want to prove that $\lambda$ an isomorphism we first start by the surjectivity: the image of $\lambda $ in $X$ contains the image of $\lambda \circ p$ which is dense in  $X$; furthermore, the properness of $\lambda $ ensures that $\lambda (\overline{Y_{\eta}} /\overline{ G_{\eta}})$ is closed in $X$. We deduce from this the equality $\lambda (\overline{Y_{\eta}} /\overline{G_{\eta}})=X$. In order to conclude that $\lambda$ is an isomorphism we need to observe that  $\lambda _s:(\overline{Y_{\eta}}/\overline{G_{\eta}})_s\to X_s$ is surjective too, but $X_s$ being reduced then $\lambda_s^{\#}:\mathcal{O}_{X_s}\hookrightarrow {(f_s)}_*(\mathcal{O}_{\overline{Y_{\eta}}/\overline{G_{\eta}}})$ is injective (\cite{EGAI}, Corollaire 1.2.7); according to \cite{WWO}, Lemma 1.3, the morphism $\lambda$, which is affine and is an isomorphism generically, is an isomorphism globally. This proves that $\overline{Y_{\eta}}\to X$ is a fpqc finite $\overline{G_{\eta}}$-torsor. It is of course pointed if $Y$ is pointed which concludes the proof once we set $H:= \overline{G_{\eta}}$ and $Z:= \overline{Y_{\eta}}$.
\endproof

In particular when $S$ is a Dedekind scheme $\pi^{B}(X,x)$ and $\pi^{Bqf}(X,x)$ (for $X$ as in the statement of Theorem ) are $S$-flat. We do not know if this remains true when $S$ has higher dimension.

%\begin{corollary}\label{corLAST}
%\end{corollary} 

\end{document}